\newtheorem{proposition}{Proposition}
\newtheorem{theorem}{Theorem}
\newcommand{\pder}[2]{\frac{\partial #1}{\partial #2}}
\newcommand{\R}{\mathbb{R}}
\title{Global Bifurcation Diagram for the Kerner-Konh\"auser Traffic Flow Model}
\author{Joaqu\'\i n Delgado \and Patricia Saavedra}
\begin{document}
\maketitle

\textbf{Keywords:} Continuous traffic flow. Traveling waves. Bautin bifurcation. Degenerate Takens--Bogdanov bifurcation. 

\begin{abstract}
We study  traveling wave solutions of the Kerner--Konh\"auser PDE for traffic flow. By a standard change of variables, the problem is reduced to a dynamical system in the plane with three parameters.
In a previous paper \cite{Ca1} it was shown that under general hypotheses on the fundamental diagram, the dynamical system has a surface of critical points showing  either a fold or cusp catastrophe when projected under a two dimensional plane of parameters named $q_g$--$v_g$.  In any case a one parameter family of Bogdanov--Takens (BT) bifurcation takes place, and therefore local families of Hopf and homoclinic bifurcation arising from each BT point exist. Here we prove the existence of a degenerate Bogdanov--Takens bifurcation (DBT) which in turn implies the existence of Generalized Hopf or Bautin bifurcations (GH). We describe numerically the  global lines of bifurcations continued from the local ones, inside a cuspidal region of the parameter space. In particular, we compute the first Lyapunov exponent, and compare with the GH bifurcation curve. We present some families of stable limit cycles which are taken as initial conditions in the PDE leading to stable traveling waves.

\end{abstract}

\section{Introduction}
Macroscopic traffic models are posed in analogy to continuous one dimensional, compressible  flow. Second-order  models consist of a system  of two coupled equations involving  the density $\rho(x,t)$ and the average velocity $V(x,t)$. In the  Kerner--Konh\"auser model these variables are related through   the continuity  and momentum equation
\begin{eqnarray}
&&\frac{\partial \rho}{\partial t}+\frac{\partial \rho V}{\partial x}=0, \label{continuity}\\
&&      \rho\left(\pder{V}{t}+V\pder{V}{x}\right)= -\pder{P}{x}+\frac{\rho (V_e(\rho)-V)}{\tau}. \label{balance}
\end{eqnarray}
 Here in analogy with compressible fluids,  the rate of change in momentum in (\ref{balance}) is
 due to a decreasing  gradient in ``pressure" $P$. The bulk forces are modeled as a tendency to acquire
 a safe velocity $V_e(\rho)$. The constant $\tau$ is a relaxation time. The model can be closed
 by a constitutive equation of the form
 $$
 P=\rho\Theta-\eta\pder{V}{x},
 $$
 where $\Theta(x,t)$ is the traffic ``variance" and $\eta$  is the analogous of the viscosity. Here and in what follows
 we will take $\Theta(x,t)=\Theta_0,$ and $\eta=\eta_0$ as  positive constants.   See \cite{KK0} for details.

 The fundamental diagram is the  relationship between  the average velocity    and traffic density $V=V_e(\rho)$.  Although empirical data shows that even the mere existence of such a functional relationship may be criticized~\cite{KSS}, we depart from the point of view that it yields a first approximation by assuming
 homogeneous solutions where the density and the average velocity remain constant but  are related through the fundamental diagram.

Next in complexity are traveling wave solutions. Under the
 change of variables $\xi=x+V_g t$ system  (\ref{continuity})--(\ref{balance}) is transformed into a system of ordinary differential equations.
 In the process of integration of the continuity equation (\ref{continuity}), there appears    the  constant $Q_g$  having the dimension of flux. In this paper  $\Theta_0$, $Q_g$   and $V_g$  are considered as the main parameters of the present study.
 The first one has a dynamical character being the proportional factor among density and pressure, $-V_g$ describes the velocity of the traveling wave and $Q_g$  is the net flux as measured by an observer moving with the same velocity as the wave \cite{Saa-Ve}.

 The main motivation for doing this research is to analyze if bounded solutions of the dynamical system can give us valuable information  of the system of PDEs (\ref{continuity})--(\ref{balance}) for different boundary conditions:   periodic for a finite domain, or bounded  for an infinite domain.  Others authors such as Lee, Lee and Kim \cite{Lee} have work with the dynamical system relating, in a qualitative form, its  solutions  to solutions of the PDE.  As far as we know,  this is the first time in this context that the dynamical machinery is applied in order to make a rigorously analysis of the global bifurcation diagram, and establishing a relation between what is observed in the dynamical system, and the solutions of the PDE.

We have shown in a previous work \cite{Ca1}, that under general properties of the fundamental diagram,
a one parameter curve of   Takens-Bogdanov (BT)  bifurcations exists, associated to a folding projection of the surface of critical points into  the two--dimensional space of parameters $Q_g$--$V_g$.  The family of BT points can be parametrized by the value of $\Theta_0$. For a fixed value of $\Theta_0$ the versal unfolding of the BT point contains codimension--one local curves of Hopf and homoclinic bifurcations in the $Q_g$--$V_g$ plane.

In this article we consider  the  dynamical system for a particular fundamental diagram due to Kerner and Konh\"auser:
\begin{equation}\label{FD}
V_e(\rho)=V_{max}\left(\frac{1}{1+\exp{[(\frac{\rho}{\rho_{max}}-0.25)/0.06]} }-3.72 \times 10^{-6}\right).
\end{equation}
We  compute explicitly  the bifurcation set and show that there exists a cuspidal curve in the parameter space $Q_g$--$V_g$ corresponding to BT bifurcations for a proper choice of $\Theta_0$.

The main result refers to the cuspidal point  of the bifurcation curve. We show that this  is in fact a degenerate Takens--Bogdanov point  (DBT), whose bifurcation diagram corresponds to the saddle case, according to Dumortier et al in \cite{Du}. We also prove that
a local curve of GH bifurcations originates from DBT and that a bifurcation of two limit cycles can occur in our model (one stable and the other unstable) \textit{for the same values of the parameters}.
 We also compute the first Lyapunov exponent $\ell_1$
and describe the set of GH points as the zero set  $\ell_1=0.$ This defines a curve that divides limit cycles bifurcating from Hopf curves into stable and unstable.
We use systematically  Kusnetzov and Govaert's \textit{Matcont } in order to  perform the global numerical continuation of Hopf bifurcation and limit cycles curves that gives the global picture of bifurcations. We take as initial conditions for system (\ref{balance}) two limit cycles, generated by Matcom, one in the stable region other in the unstable, and we show that they give place to two traveling waves that can be stable or unstable.

The rest of the paper is organized as follows: in Section ~2 we introduce the dynamical system,  and the surface of critical points where we give conditions for non hyperbolic points to be Hopf or Takens Bogdanov.
 In Section~3 we present all the theoretical results, including the calculation of the first Lyapunov exponent in order to analytically determine the curve of Bautin points,  or Generalized Hopf points which let us determine the stability region of  limit cycles,  associated to Hopf points. We also show the existence of a degenerate Takens Bogdanov bifurcation.

 In Section~4  we present  the dynamical consequences of the global bifurcation diagram obtained in the previous sections. This includes families of homoclinic an heteroclinic solutions.
 In Section~5 we study in detail families of limit cycles which represent periodic traveling waves of the PDE in a bounded domain.
 Finally, conclusions are given in Section~6. At the end of the article we include the proof of some of the theoretical results.

\section{The dynamical system and the surface of critical points}\label{TRsection}
We look for traveling wave solutions of (\ref{continuity},\ref{balance}). In order to obtain it we apply to these equations the following change of variables  $\xi=x+V_g t.$ The first equation is transformed into a quadrature which can be immediately solved:
\begin{equation}\label{flux}
\rho (V+V_g)=Q_g.
\end{equation}

Following \cite{Saa-Ve} we introduce dimensionless variables
\begin{equation}\label{adim}
    z=\rho_{max}\xi,\quad v=\frac{V}{V_{max}},\quad v_g=\frac{V_g}{V_{max}},\quad q_g=\frac{Q_g}{\rho_{max}V_{max}},\quad r= \frac{\rho}{\rho_{max}}.
\end{equation}
 Then (\ref{flux}) becomes
\begin{equation}\label{flux2}
   r=\frac{q_g}{v+v_g},
\end{equation}
and observe that in the fundamental diagram (\ref{FD}), $V_e$ depends only on the ratio $r$. By abuse of notation we also write $V_e(\rho)$ as $V_e(r)$.
Also let
\begin{equation}\label{adim2}
    \tilde{v_e}(r)=\frac{V_e(r)}{V_{max}},\quad \theta_0=\frac{\Theta_0}{V_{max}^2},\quad \lambda=\frac{V_{max}}{\eta_0},\quad
    \mu=\frac{1}{\rho_{max}\eta_0\tau}.
\end{equation}
In what follows we will denote by $v_e(v)$ the composition of $\tilde{v_e}$  with $r$ given by (\ref{flux2}),
and whenever we want to make explicit the dependence on the parameters
\begin{equation}\label{ve}
v_e(q_g,v_g,v)=\tilde{v_e}\left(\frac{q_g}{v+v_g}\right).
\end{equation}
Also for simplicity in the notation  we will use the shorthand
$$v_e'(v)=\pder{v_e(q_g,v_g,v)}{v}.
$$
Observe that $v_g$ and $v$ appear symmetrically in (\ref{ve}), therefore
$$
\pder{v_e(q_g,v_g,v)}{v_g}=\pder{v_e(q_g,v_g,v)}{v}=v_e'(v).
$$

Substitution of (\ref{flux}) into the second equation of (2) yields the following
dynamical system
\begin{eqnarray}\label{KKode}
  \frac{dv}{dz} &=& y, \nonumber\\
  \frac{dy}{dz} &=& \lambda q_g \left[1-\frac{\theta_0}{(v+v_g)^2}\right] y-\mu q_g
  \left(\frac{v_e(v)-v}{v+v_g}\right).
\end{eqnarray}
Here and in what follows, we will take the parameter values $\lambda$, $\mu$ as given by the model, and we will
analyze the dynamical behavior with respect to the parameters $\theta_0$, $v_g$, $q_g.$

\begin{proposition}
 Let $V_e(\rho)$ be given by (\ref{FD}) then there exist parameter values for $q_g$ and $v_g$ such that the dynamical system has up to 3 critical points.
\end{proposition}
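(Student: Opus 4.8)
The plan is to reduce the search for critical points to a one–dimensional root–counting problem and then, for the explicit logistic diagram (\ref{FD}), to exhibit a range of parameters producing three roots. First I would locate the critical points of (\ref{KKode}): setting $dv/dz=0$ forces $y=0$, and then $dy/dz=0$ reduces, for $q_g\neq 0$ and $v+v_g\neq 0$, to the single scalar equation $v_e(v)=v$. Using the flux relation (\ref{flux2}) it is cleaner to pass to the density variable $r=q_g/(v+v_g)$: since $v=q_g/r-v_g$, the fixed–point condition $\tilde{v_e}(r)=v$ becomes
\begin{equation}
q_g = r\bigl(\tilde{v_e}(r)+v_g\bigr)=g(r)+v_g\,r,\qquad g(r):=r\,\tilde{v_e}(r),
\end{equation}
where $g$ is the dimensionless fundamental flux. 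Thus the critical points on the physical branch $r>0$ are in one–to–one correspondence with the intersections of the graph of $g$ with the line $\ell(r)=q_g-v_g r$, and the number of critical points equals the number of such intersections.

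Next I would show that three intersections are possible. The key observation is that a strictly concave $g$ meets any line in at most two points, so a necessary condition for three critical points is that $g$ lose convexity, i.e.\ that $g''$ change sign on $r>0$. I would verify this directly from (\ref{FD}): writing $\sigma(r)=1/(1+\exp[(r-0.25)/0.06])$, the diagram is $\tilde{v_e}(r)=\sigma(r)-3.72\times10^{-6}$, whose logistic profile has an inflection near $r=0.25$; differentiating $g=r\,\tilde{v_e}$ twice and evaluating $g''$ at a point to the left and a point to the right of this transition exhibits opposite signs, so by the intermediate value theorem $g$ has an inflection point $r_\ast$. Having established non–convexity, I would select a slope $-v_g$ matching the curve near the inflection, i.e.\ $v_g\approx -g'(r_\ast)$, together with an intercept $q_g$ so that $\ell$ is a genuine threefold secant of the graph, and confirm the count by exhibiting abscissae $r_1<r_2<r_3<r_4$ at which $h(r):=g(r)+v_g r-q_g$ takes the alternating signs $-,+,-,+$, giving three sign changes and hence three roots (the outer signs being consistent with $h(0^{+})=-q_g<0$ and $h(r)\to+\infty$ for $v_g>3.72\times10^{-6}$).

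The main obstacle is that $\tilde{v_e}$ is transcendental, so neither the inflection condition $g''(r)=0$ nor the root equation $h(r)=0$ can be solved in closed form; the argument must therefore be closed by verifying a finite list of explicit numerical inequalities — the signs of $g'$ and $g''$ at chosen sample points, or equivalently the alternating signs of $h$ — rather than by symbolic manipulation. A secondary point to check is that every root produced lies in the admissible region $r>0$ with $v+v_g=q_g/r>0$, which holds automatically once $q_g>0$, so the reduction through (\ref{flux2}) is valid at each critical point and the three intersections genuinely yield three critical points of (\ref{KKode}).
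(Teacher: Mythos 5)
Your reduction is correct and your plan would work; note, though, that the paper does not actually prove this proposition here --- it cites \cite{Ca1} and illustrates the count with Figure~1, which counts intersections of the sigmoidal graph $v_e(v)=\tilde{v_e}(q_g/(v+v_g))$ with the identity line in the velocity variable. Your version passes to the density variable and counts intersections of the dimensionless flux $g(r)=r\,\tilde{v_e}(r)$ with the line $q_g-v_g r$; this is the same scalar equation in dual coordinates, but it has the real advantage that the curve $g$ is parameter--free and all of $(q_g,v_g)$ is absorbed into the secant line, so ``three critical points are possible'' becomes a clean statement about the non-convexity of a single fixed curve, whereas the paper's picture deforms the sigmoid itself as the parameters move. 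Your sufficiency step is the right one: the tangent line at the inflection $r_*$ has triple contact, so nearby lines are genuine threefold secants, and the alternating-sign check $-,+,-,+$ closes the argument (the asymptotics $h(0^+)=-q_g<0$ and $h(r)\to+\infty$ for $v_g>3.72\times10^{-6}$ are correct since $\tilde{v_e}(r)\to-3.72\times10^{-6}$). Two small caveats: first, as you acknowledge, the sign evaluations for the transcendental $g''$ and $h$ must be carried out numerically at explicit sample points --- the paper is no more rigorous on this, relying on the ``sigmoidal shape'' of $v_e$ as in \cite{Ca}; second, the phrase ``up to 3'' also tacitly asserts that three is the maximum, which your argument does not address --- it would follow from the fact that $g''$ changes sign exactly once (so $h''$ has one zero, hence $h$ has at most three roots by Rolle), and adding that one line would make your count sharp.
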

This proposition was proved in \cite{Ca1}.    The Figure~\ref{cusp} shows the corresponding graph for the Kerner--Konh\"auser fundamental diagram in the case of three critical points.
\begin{figure}
\centering
\includegraphics[height=2in]{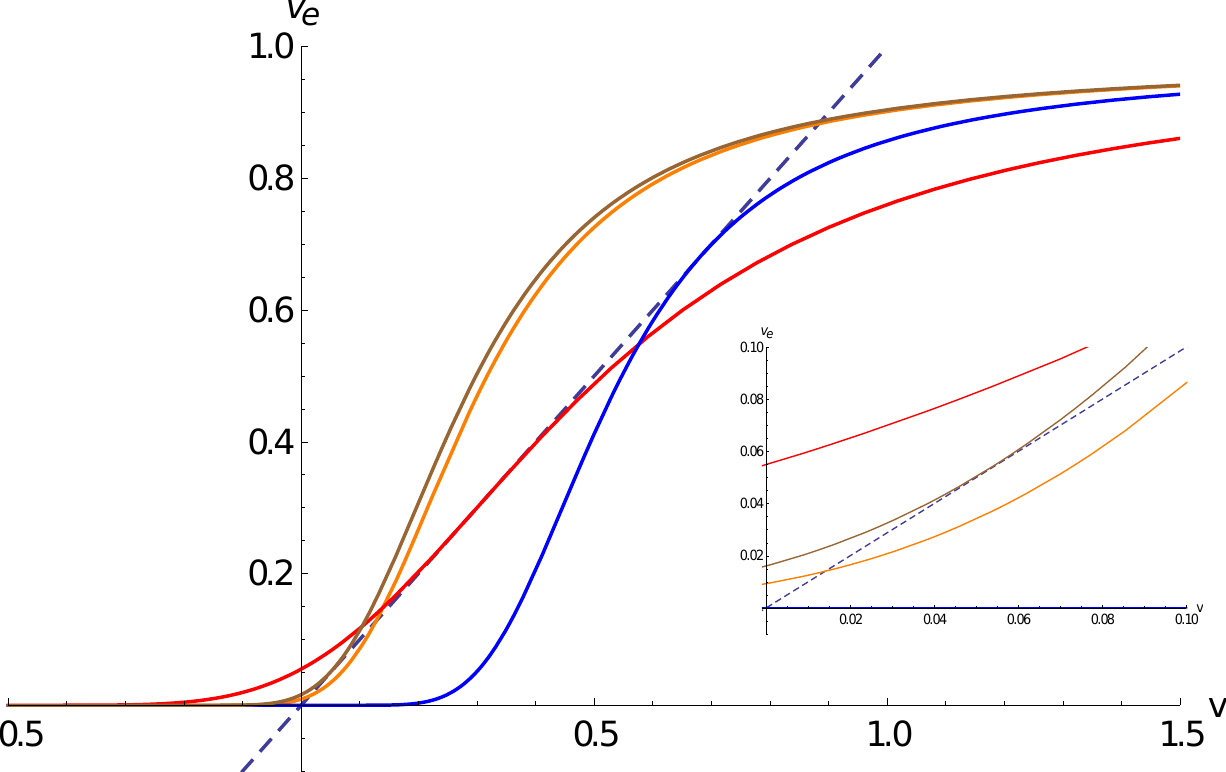}
\caption{Kerner-Konh\"auser fundamental diagram $v_e(v)$ showing  up to three  intersections with the identity (dashed line): $v_e(v_c)=v_c$. Distinct situations are illustrated by graphs in different colors. Red: $v_e'(v_c)=v_e''(v_c)=0$.  Brown and blue:  $v_e'(v_c)=0$. Orange: three intersections, the middle one with $v_e'(v_c)>1$, the others satisfy $v_e'(c_c)<1.$}
\label{sigmoide}
\end{figure}
The linear part of (\ref{KKode}) at $v_c$ is
$$
A_0=\left(
\begin{array}{ll}
 0 & 1 \\
 -\frac{\mu q_g    \left(v_e'(v_c)-1\right)}{v+v_g} &
 \lambda q_g  \left(1-\frac{\theta_0}{(v_c+v_g)^2}\right)
\end{array}
\right)\equiv  \left(
\begin{array}{ll}  0 & 1 \\ c & b
\end{array}
\right).
$$
The characteristic polynomial $\lambda^2-b\lambda -c=0$ yields
the eigenvalues
\begin{equation}\label{eigenvalues}
l_{1,2}=\frac{b\pm \sqrt{b^2+4c}}{2}.
\end{equation}
The stability of the critical points is given in the following proposition  \cite{Ca1} .
\begin{proposition}\label{stability}

 Let $(v_c,0)$ be a critical point of system (\ref{KKode}), then
\begin{itemize}
  \item If $v_e'(v_c)<1$ then $c>0$ and the roots $l_{1,2}$ are real and with opposite signs. Thus the critical point is a saddle.
  \item If $v_e'(v_c)>1$ then $c<0$ and  either the roots $l_{1,2}$ are real of the same sign as $b$ and the critical point is a node, or $l_{1,2}$ are complex conjugate with real part $b$ and the critical point is a focus. Thus the sign of  $b$ determines the stability
  of the critical point: if $b<0$ it is stable, if $b>0$ it is unstable.
  \item If $v_e'(v_c)=1$ then $c=0$ and one eigenvalue becomes zero. If in addition, $b=0$ then  zero is
  an eigenvalue of multiplicity two.
\end{itemize}
\end{proposition}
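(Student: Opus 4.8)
The plan is to read everything off the $2\times 2$ matrix $A_0$ and its characteristic polynomial $l^2-bl-c=0$, reducing the proof to a sign analysis via Vieta's relations $l_1+l_2=b$ and $l_1l_2=-c$. The only model input needed is $\lambda,\mu>0$ from the definitions (\ref{adim2}), together with positivity of the density ratio $r>0$ at a physical critical point. The decisive simplification is that, by (\ref{flux2}), $q_g/(v_c+v_g)=r$, so that
$$
c = -\frac{\mu q_g\,(v_e'(v_c)-1)}{v_c+v_g} = \mu\,r\,(1-v_e'(v_c)),
$$
and since $\mu r>0$ the sign of $c$ is exactly the sign of $1-v_e'(v_c)$. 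This gives $c>0$, $c<0$, or $c=0$ according to whether $v_e'(v_c)$ is less than, greater than, or equal to $1$, settling the first assertion in each of the three bullets.

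Next I would determine the nature of the eigenvalues case by case using the discriminant $\Delta=b^2+4c$ together with $l_1l_2=-c$. When $v_e'(v_c)<1$ we have $c>0$, hence $\Delta=b^2+4c>0$ and the roots are real, while $l_1l_2=-c<0$ forces opposite signs; this is a saddle. When $v_e'(v_c)>1$ we have $c<0$, hence $l_1l_2=-c>0$, so the real eigenvalues (when $\Delta>0$) share a common sign, and since $l_1+l_2=b$ that common sign is the sign of $b$: a stable node for $b<0$ and an unstable node for $b>0$. If instead $\Delta<0$ the roots are the complex conjugates $\tfrac{b}{2}\pm\tfrac{i}{2}\sqrt{|\Delta|}$, whose real part has the sign of $b$, so the focus is stable for $b<0$ and unstable for $b>0$; in both subcases the sign of $b$ alone decides stability, as claimed. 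Finally, when $v_e'(v_c)=1$ we have $c=0$, the characteristic polynomial factors as $l(l-b)=0$, and the eigenvalues are $0$ and $b$; adding the hypothesis $b=0$ reduces this to $l^2=0$, a double zero eigenvalue.

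Since every step reduces to elementary facts about a real quadratic, there is no genuine analytic obstacle here; the points requiring care are purely bookkeeping. First, I must make explicit that $r>0$ (equivalently, that $q_g$ and $v_c+v_g$ carry the same sign) is a standing physical assumption, as it is precisely what pins down the sign of $c$ and hence the whole classification. Second, the real part of the complex eigenvalues is $b/2$ rather than $b$ as literally written in the statement; this is immaterial to every conclusion, since only the sign of $\operatorname{Re} l_{1,2}$ — equal to the sign of $b$ — enters, but I would phrase the focus case in terms of $\operatorname{Re} l_{1,2}=b/2$ to keep the statement internally consistent.
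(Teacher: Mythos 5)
Your argument is correct: the sign analysis of $c=\mu r\,(1-v_e'(v_c))$ together with Vieta's relations $l_1+l_2=b$, $l_1l_2=-c$ and the discriminant $b^2+4c$ is the standard trace--determinant classification, and the paper itself gives no proof here (it cites \cite{Ca1}), so your write-up simply supplies the expected elementary argument. Your two caveats are well taken and consistent with the paper: the restriction $x=v_c+v_g>0$ (hence $r>0$) is indeed assumed, and the real part of the complex eigenvalues is $b/2$, not $b$ as the statement literally says, which affects nothing since only its sign matters.
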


 Whenever there are  three critical points, two of them  $v_c^1<v_c^2$  are saddles,  and one  is  a stable/unstable focus or node $v_c$ depending on the parameter values $(q_g,v_g)$, and $v_1^2<v_c<v_c^2$. In this case the condition $v_e'(v_c)>1$ must be satisfied.

For the Kerner--Konh\"auser fundamental diagram  (\ref{FD}) the set of critical points is given by the surface
\begin{equation}\label{fold_surface}
\{(q_g,v_g,v_c)\mid v_e(v_c)-v_c=0\},
\end{equation}
which is depicted in Figure~\ref{cusp}.
\begin{figure}[ht]
\centering
  \includegraphics[width=2in]{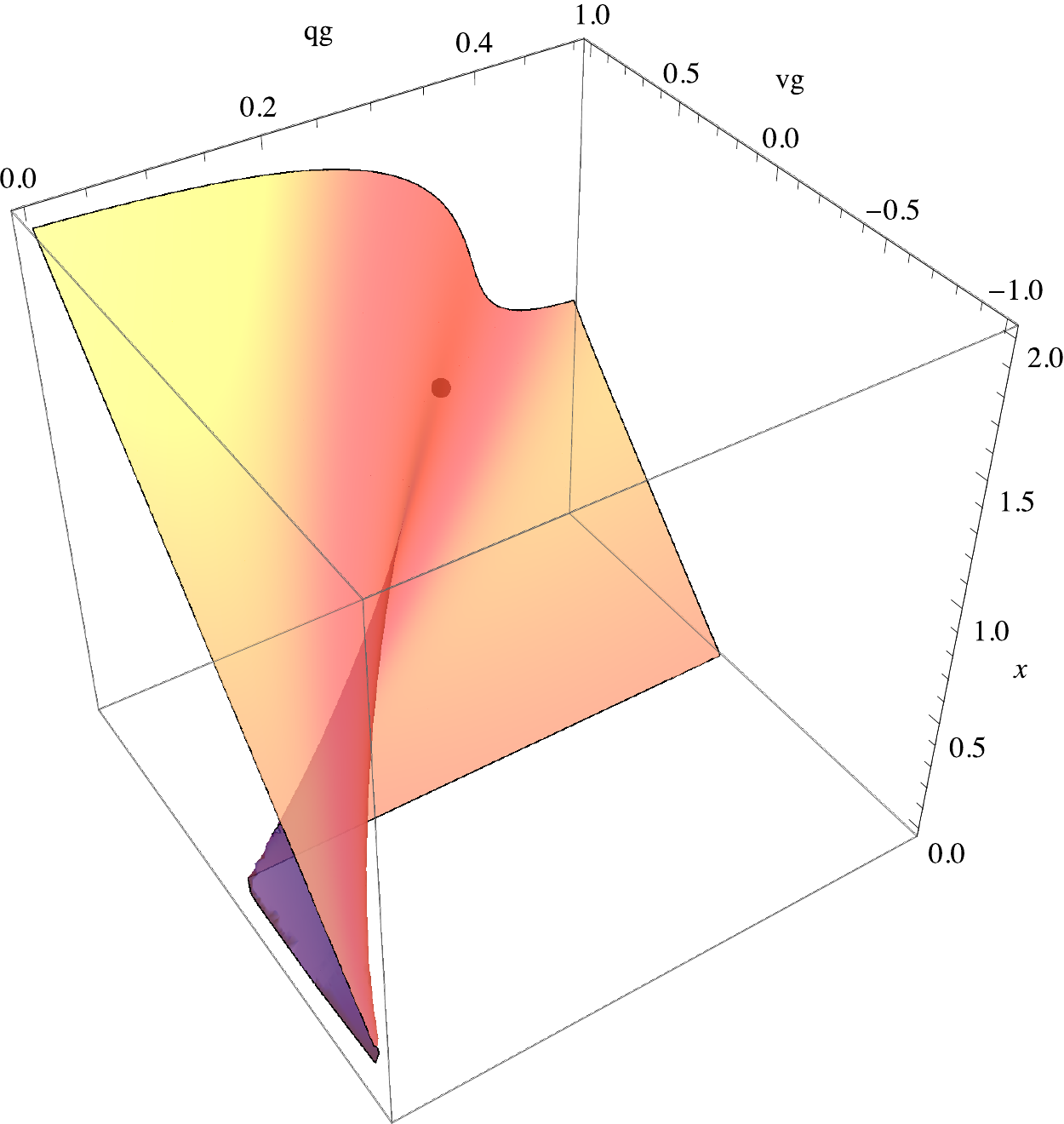} \quad
   \includegraphics[width=2in]{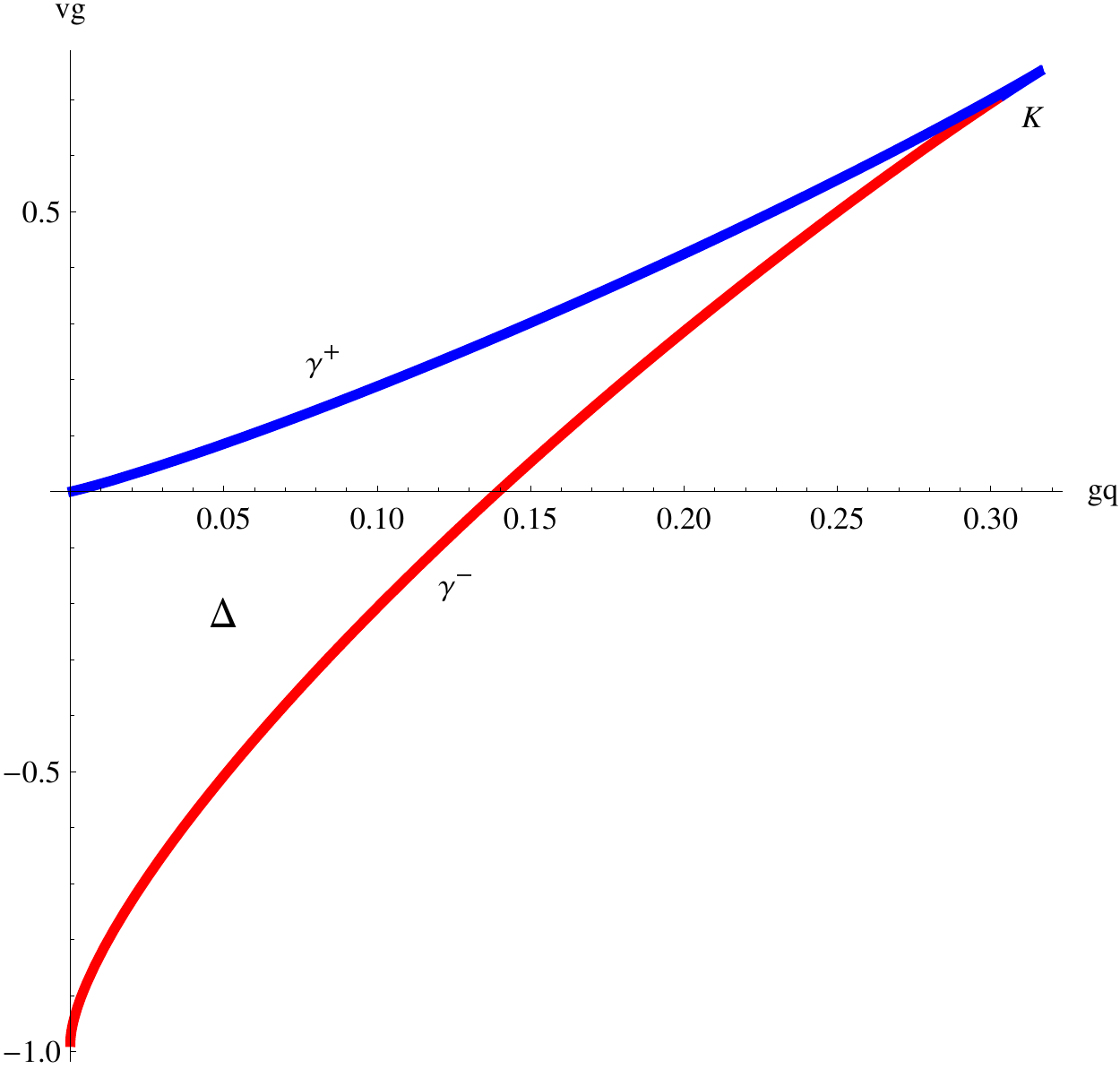}
  \caption{Left: Surface of critical points. Right:  The singular locus of the projection $\gamma$. The upper part  $\gamma^+$ is shown in blue, the lower part $\gamma^-$ in red.}
  \label{cusp} \end{figure}
  For simplicity,
the surface of critical points is represented in $(q_g,v_g,x)$ coordinates
where $x=v_g+v_c$ and we restrict to $x>0$.
Geometrically for given points in the parameter plane  $(q_g,v_g)$,  the coordinates of the critical points
$v_g+v_c$  are obtained as intersections of the line parallel to the $x$--axis passing through the point.

There is a curve in three dimensional space $(v_g,q_g,v_c)$ where the surface of critical points folds back. It is the set of points where the projection $(v_g,q_g,v_c)\stackrel{\pi}{\to} (v_g,q_g)$ restricted to the surface fails to be surjective.
Analytically, this set is a curve  given  by two  equations
\begin{eqnarray*}
\tilde{\gamma}=\{ (q_g,v_g,v_c)\mid v_e(v_c)-v_c= 0,\quad v_e'(v_c)-1=0\}.
\end{eqnarray*}
This curve and its projection $\gamma=\pi(\tilde{\gamma})$  in parameter space $q_g$--$v_g$
are shown in Figure ~\ref{cusp}.  For
  $(q_g,v_g)\in\gamma $ the graph of  $v_e(v)$ is tangent
to the  identity at $v_c$ which is then a saddle--node. If in addition,
 $\theta =\sqrt{v_g+v_c}$  then the critical point is a Takens--Bogdanov bifurcation point  whenever the non--degeneracy conditions
 \begin{equation}\label{ND}
 v_e''(v_c)\neq 0,\qquad\mbox{and}\qquad
\frac{\partial^2 v_e(v_c)}{\partial q_g\partial v}\neq 0
\end{equation}
are satisfied.

The complement of $\tilde{\gamma}$ has two components, the folded part corresponds to critical points such that  $v_e(v_c)=v_c$ and  $v_e'(v_c)>1$. This follows from the sigmoidal shape of the curve $v_e(v)$, shown in Figure~\ref{sigmoide}, see \cite{Ca}. The second component contains the saddle points associated to the same value of the parameters $(q_g,v_g)$  where $v_e'(v_c)<1.$

The cusp point  $K$ of the curve $\gamma$ is defined by the three conditions
\begin{equation}\label{degenerate_BT}
v_e(v_c)-v_c=0,\qquad v_e'(v_c)=1,\qquad v_e''(v_c)=0,\quad v_e'''(v_c)\neq0
\end{equation}
and divides $\gamma$ in two components. We will call $\gamma^+$  the upper, and $\gamma^-$ the  lower part of $\gamma$ according to Figure~\ref{cusp}.
It will be analyzed   in detail in Section 3.2, that this cusp point gives rise to a degenerate Takens--Bogdanov (DTB) bifurcation.
Here we just mention that for the Kerner--Konh\"auser fundamental diagram (\ref{FD}) there exists a unique point $(q_v^*,v_g^*,v_c^*)$ satisfying (\ref{degenerate_BT}) with $v_e'''(v_c^*)<0$, therefore $K=(q_g^*,v_g^*)$. Numerical values are given in Section 4.1.

\section{Global bifurcations inside the cusp}
In this paper we will be interested in the cuspidal region $\Delta$ with boundary  $\gamma=\partial\Delta$, which is the projection of the patch of the surface that folds back:
\begin{equation}\label{fold_surface}
\mathcal{F}=\{(q_g,v_g,v_c)\mid v_e(v_c)-v_c=0,\quad v_e'(v_c)>1\}.
\end{equation}

\begin{proposition}
Let  $\pi_{\mathcal{F}}$ be the restriction of the projection $(q_g,v_g,v_c)\mapsto (q_g,v_g)$ to $\mathcal{F}$. Then $\pi_{\mathcal{F}}\colon\mathcal{F}\to\Delta $ is a diffeomorphism .
\end{proposition}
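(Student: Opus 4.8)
The plan is to realize $\mathcal{F}$ as a graph over the $(v_g,v_c)$--plane, to turn $\pi_{\mathcal{F}}$ into an explicit planar map, and then to verify that this map is an injective local diffeomorphism whose image is $\Delta$. Since a smooth bijection that is everywhere a local diffeomorphism is a diffeomorphism (its inverse is smooth because it agrees locally with the inverses furnished by the inverse function theorem), this is enough. First I would exploit that the reduced fundamental diagram $\tilde v_e$ in (\ref{FD}) is strictly decreasing, hence globally invertible; writing $\phi=\tilde v_e^{-1}$ we have $\phi'<0$. Solving the surface equation $v_e(v_c)=v_c$, i.e. $\tilde v_e\big(q_g/(v_c+v_g)\big)=v_c$, for $q_g$ yields $q_g=(v_c+v_g)\,\phi(v_c)$. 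Thus $\mathcal{F}$ is the graph of this function over the region $R=\{(v_g,v_c)\mid v_e'(v_c)>1,\ v_c+v_g>0\}$, it is in particular a smooth surface, and in these coordinates $\pi_{\mathcal{F}}$ becomes the manifestly smooth map $\Psi(v_g,v_c)=\big((v_c+v_g)\phi(v_c),\,v_g\big)$.

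Next I would compute the Jacobian and obtain $\det D\Psi=-\partial_{v_c}q_g=-\big(\phi(v_c)+(v_c+v_g)\phi'(v_c)\big)$. Using $\phi'=1/\tilde v_e'$ together with the chain rule $v_e'(v_c)=-\tilde v_e'(r)\,r/(v_c+v_g)$ at $r=\phi(v_c)$, a short calculation gives $v_e'(v_c)-1=-\partial_{v_c}q_g\big/\big((v_c+v_g)\phi'\big)$; since $v_c+v_g>0$ and $\phi'<0$, the quantities $v_e'(v_c)-1$ and $\partial_{v_c}q_g$ share the same sign, and in fact $v_e'(v_c)>1\iff\partial_{v_c}q_g>0$. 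Hence $\det D\Psi<0$ everywhere on $R$, so $\pi_{\mathcal{F}}$ is a local diffeomorphism. This is exactly the statement that we are off the fold locus, where $\det D\Psi=0\Leftrightarrow v_e'(v_c)=1$ carves out $\tilde\gamma$.

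Because $\Psi$ fixes the second coordinate $v_g$, injectivity reduces to injectivity of the fiber map $v_c\mapsto(v_c+v_g)\phi(v_c)$ on each horizontal slice $R_{v_g}=\{v_c\mid(v_g,v_c)\in R\}$. On $R$ this map has positive derivative $\partial_{v_c}q_g$, so it is strictly increasing on each slice, and injectivity follows once every $R_{v_g}$ is shown to be a single interval. Surjectivity onto $\Delta$ is immediate since $\Delta$ is \emph{defined} as $\pi(\mathcal{F})$. Assembling the pieces, $\pi_{\mathcal{F}}$ is a smooth bijection that is everywhere a local diffeomorphism, hence a diffeomorphism.

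The main obstacle is precisely the claim that for each fixed $v_g$ the slice $R_{v_g}=\{v_c\mid\partial_{v_c}q_g>0\}$ is a single interval: if it consisted of two disjoint intervals of increase, the fiber map could repeat a value of $q_g$ and destroy injectivity. This is where the sigmoidal shape of $\tilde v_e$ (a single inflection point, Figure~\ref{sigmoide}) is essential, as it forces $v_c\mapsto q_g(v_c)$ to have exactly one local minimum and one local maximum between which $\partial_{v_c}q_g>0$; these two critical values are the two fold points on $\tilde\gamma$ cut out by the line $v_g=\text{const}$ inside the cusp. Equivalently, this is the statement recorded before Proposition~\ref{stability} that $\mathcal{F}$ is the single connected fold component of the surface minus $\tilde\gamma$, and each such slice meets $\tilde\gamma$ exactly twice. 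Verifying this count rigorously for the explicit diagram (\ref{FD}) is the step that requires genuine care.
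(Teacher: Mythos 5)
Your reduction of $\pi_{\mathcal{F}}$ to the explicit planar map $\Psi(v_g,v_c)=\bigl((v_c+v_g)\phi(v_c),\,v_g\bigr)$ is sound, and the Jacobian computation --- $\det D\Psi=-\pder{q_g}{v_c}$ together with the sign identity relating $\pder{q_g}{v_c}$ and $v_e'(v_c)-1$ --- is correct and gives a clean coordinate picture of the fold. But the argument as written does not prove the proposition: injectivity is reduced to the claim that each slice $R_{v_g}=\{v_c\mid \pder{q_g}{v_c}>0\}$ is a single interval, and you explicitly leave that claim unverified (``the step that requires genuine care''). This is the whole content of the statement, not a removable formality. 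A surjective local diffeomorphism need not be injective; if for some $v_g$ the function $v_c\mapsto(v_c+v_g)\phi(v_c)$ had two disjoint intervals of increase, the same value of $q_g$ could be attained on both, two distinct points of $\mathcal{F}$ would project to the same $(q_g,v_g)$, and $\pi_{\mathcal{F}}$ would fail to be one-to-one. What must be shown is that a line $v_g=\mathrm{const}$ through the cusp meets $\tilde\gamma$ in exactly two points, equivalently that for fixed $(q_g,v_g)$ there is at most one fixed point of $v_e$ with $v_e'(v_c)>1$. The gap is fillable: if $\kappa_1<\kappa_2$ were two such fixed points, then $g(v)=v_e(v)-v$ would vanish at both with $g'>0$, forcing a third zero strictly between them with $g'\le 0$; the sigmoidal shape of $v_e$ (single inflection, at most three fixed points with the slope-$>1$ one in the middle) rules this configuration out. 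That one-variable argument is the missing piece you need to supply.

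For comparison, the paper takes a different route: it constructs the inverse $\Delta\to\mathcal{F}$ directly from the implicit function theorem (using $v_e'(v_c)-1\neq 0$) and shows the local branches patch into a single-valued map by a mean value theorem argument, in which two distinct solutions over the same $(q_g,v_g)$ would give $|v_e'(\xi)|>1$ at an intermediate point $\xi$, contradicting the MVT identity $v_e'(\xi)=1$. Note that this step relies on exactly the same global information your slice-connectedness claim needs: the intermediate point $\xi$ need not lie on $\mathcal{F}$, so asserting $|v_e'(\xi)|>1$ there is again the statement that no fold point intervenes between the two solutions. So your mechanism (explicit parametrization by $(v_g,v_c)$ plus a Jacobian computation) is genuinely different from the paper's (IFT plus an MVT uniqueness argument), and arguably more transparent about where the difficulty sits; but both hinge on the same fixed-point count for the sigmoidal $v_e$, and neither proof is complete until that count is established.
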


\begin{proof} Let $p^{(0)}=(q^{(0)}_g,v_g^{(0)})\in\Delta$.
By the implicit function theorem applied to $v_e(q_g,v_g,v_c)-v_c=0$,  if  $v_e'(v_c)>1$ there exists  a smooth function $\kappa_{p_0}$, defined in a neighborhood $\mathcal{N}_{p_0}$ of $p^{(0)}$, such that
$v_e(q_g,v_g,\kappa_{p_0}(q_g,v_g))-\kappa_{p_0}(q_g,v_g)=0$, for $(q_g,v_g)\in \mathcal{N}_{p_0}$.  Obviously $\Delta=\bigcup_{p\in\Delta}\mathcal{N}_{p}$. Let the map $k\colon\Delta\to\mathcal{F}$
be defined by $k(q_g,v_g)=\kappa_{p_0}(q_g,v_g)$ if $(q_g,v_g)\in\mathcal{N}_{p_0}$. We will see that $k$ is well defined. For this, suppose $(q_g,v_g)\in \mathcal{N}_{p_1}\cap \mathcal{N}_{p_2}$. By contradiction,  suppose
$\kappa_{p_1}(q_g,v_g)\neq \kappa_{p_2}(q_g,v_g)$.
 Then
$v_e(q_g,v_g,\kappa_{p_i}(q_g,v_g))=\kappa_{p_i}(q_g,v_g),\quad i=1,2$, and by the mean value theorem
\begin{eqnarray*}
\kappa_{p_1}(q_g,v_g)-\kappa_{p_2}(q_g,v_g)&=&
v_e(q_g,v_g,\kappa_{p_1}(q_g,v_g))-v_e(q_g,v_g,\kappa_{p_2}(q_g,v_g))\\
 &=&v_e'(q_v,v_g,v_c)\left(\kappa_{p_1}(q_g,v_g)-\kappa_{p_2}(q_g,v_g)\right)
\end{eqnarray*}
Thus
\begin{eqnarray*}
|\kappa_{p_1}(q_g,v_g)-\kappa_{p_2}(q_g,v_g)|&=&
     |v_e'(q_v,v_g,v_c)| |\kappa_{p_1}(q_g,v_g)-\kappa_{p_2}(q_g,v_g)|\\
&>&|\kappa_{p_1}(q_g,v_g)-\kappa_{p_2}(q_g,v_g)|.
\end{eqnarray*}
this completes the proof.
\end{proof}

 For future reference we compute by implicit differentiation
\begin{equation}\label{partial_vg}
\pder{v_c}{v_g}=-\frac{v_e'(v_c)}{v_e'(v_c)-1}.
\end{equation}

In the following section we present the global picture of bifurcations  appearing in system (\ref{KKode}) inside the cuspidal region $\Delta$.  We will describe the global Hopf curves emerging from Takens-Bogdanov, and the families of limit cycles which  originated in Hopf points. We also compute the first Lyapunov coefficient which determines their stability (see Proposition~\ref{FirstLyapunov}).  When the first Lyapunov coefficient is zero we get a curve of Bautin bifurcations (see Section 3.1) . We also show that  the cuspidal point is a degenerate Takens--Bogdanov point whose bifurcation diagram corresponds to the saddle case  studied by Dumortier et al~\cite{Du}. This prove rigorously the existence of Bautin bifurcations.

\subsection{Bautin bifurcation}
Generalized Hopf or  Bautin bifurcation has  codimension two. Its normal form is given in  \cite[p. 311]{Kuz} and  its bifurcation diagram is  shown in Figure~\ref{Bautin}. For our purposes it will be enough to  recall that necessary conditions can be stated in terms of the eigenvalues $l_{1,2}=\mu(\alpha)\pm i\omega(\alpha)$ depending on the vector of parameters
$\alpha\in\R^2$, namely
\begin{equation}\label{Bautin-cond}
\mu(0)=0,\quad \ell_1(\alpha)=0.
\end{equation}
Additional non--degeneracy conditions involving the second Lyapunov coefficient $\ell_2(0)$,  and the regularity of the map $\alpha\mapsto (\mu(\alpha),\ell_1(\alpha))$ are shown to be sufficient.

We will prove the  existence of this kind of bifurcation, indirectly, by proving that in fact a codimension three bifurcation, a degenerate Takens--Bogdanov, occurs associated to a cusp point of the surface of bifurcation. See Theorem~\ref{DTB},  and  in the Appendix~B its proof. Thus  the existence of Bautin bifurcations will follow from the normal form  already mentioned~\cite{Du}. In this way we will not need to verify explicitly the non--degeneracy conditions.

 The bifurcation diagram  for a Bautin bifurcation is shown in Figure~\ref{Bautin}. It†contains two branches of subcritical ($H_{+}$) and supercritical  ($H_{-}$) Hopf bifurcations and  a single branch of saddle--node bifurcation of cycles LPC (standing for limit point of cycles) where two hyperbolic stable and unstable cycles, coalesce in a single saddle--node  cycle.

\begin{figure}[ht]
\centering
  \includegraphics[width=2in]{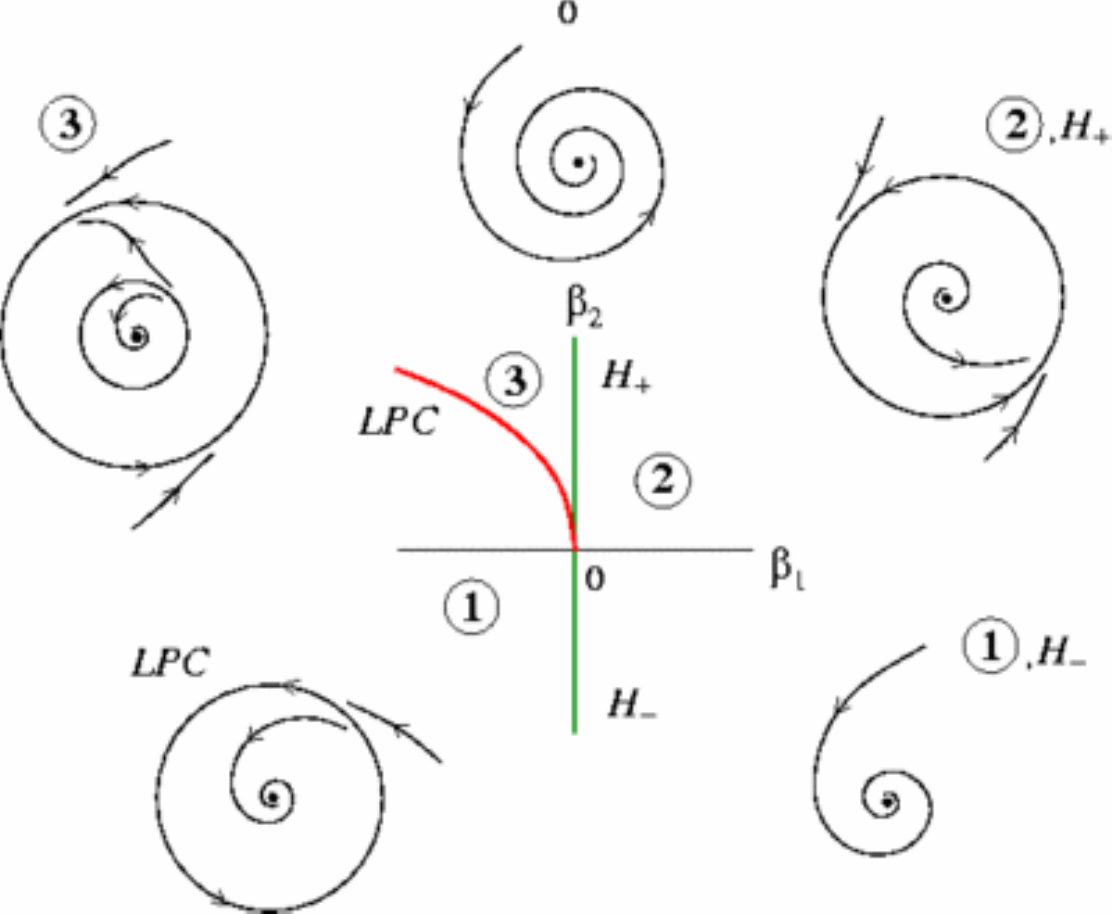}
  \caption{Bifurcation diagram for Bautin bifurcation}
  \label{Bautin}
  \end{figure}

Besides the vanishing of the First Lyapunov coefficient $\ell_1$ determines  a Bautin deformation, its sign also  determines the stability of a limit cycle emerging from a Hopf bifurcation.  The explicit form of $\ell_1$ is stated in Proposition~\ref{FirstLyapunov},  and it will be of great importance in the numerical study of limit cycles presented in Section \ref{numerical}.

Given  $q_g,$ $v_g,$ denote by $l_{1,2}(q_g,v_g)=\mu(q_g,v_g)\pm\omega(q_g,v_g) i$ the
eigenvalues (\ref{eigenvalues}) of the  linearization.

 Let $(v_c,0) $ be a critical point of (\ref{KKode}) such that $v_e(v_c)=v_c$, $v_e'(v_c)>1$ and choose $\theta_0=(v_c+v_g)^2$,  then $b=0$  and the eigenvalues are purely imaginary
 $$
 l_{1,2}=\pm i\omega_0
 $$
 with
  $$
 \omega_0^2=\frac{\mu q_g(v_e'(v_c)-1)}{(v_c+v_g)}.
 $$

\begin{proposition}\label{FirstLyapunov} Let $(v_c,0)$ be a critical point such that $v_e'(v_c)>1$,  and $\theta=\sqrt{v_v+v_g}$, then
the first Lyapunov coefficient is given by the expression
\begin{equation}\label{ell1}
\ell_1(q_g,v_g)=-\frac{\lambda\mu q_g^2}{2\omega_0^3(v_c+v_g)^2}\left( \frac{v_e'(v_c)-1}{v_c+v_g}+v_e''(v_c)\right).
\end{equation}
\end{proposition}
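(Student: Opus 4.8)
The plan is to compute $\ell_1$ from the standard invariant formula for a planar Hopf point, taking advantage of a structural peculiarity of (\ref{KKode}): its second component is \emph{affine} in $y$. Writing the right-hand side of the $y$-equation as $g(v,y)=P(v)\,y+R(v)$, with $P(v)=\lambda q_g[1-\theta_0/(v+v_g)^2]$ and $R(v)=-\mu q_g(v_e(v)-v)/(v+v_g)$, this dependence is manifest, and it immediately forces every mixed partial derivative of $g$ containing two or more factors of $y$ to vanish, i.e. $g_{yy}=g_{vyy}=g_{yyy}=0$. After translating the equilibrium to the origin by $u=v-v_c$, the only Taylor coefficients that can survive in the Lyapunov computation are $g_{vv}=R''(v_c)$, $g_{vy}=P'(v_c)$, $g_{vvy}=P''(v_c)$ and, a priori, $g_{vvv}=R'''(v_c)$.

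Next I would pass to the eigenbasis of the linear part $A_0$ at the Hopf point (where $b=0$): take the eigenvector $q=(1,i\omega_0)^{\top}$ and the adjoint eigenvector $p$ normalized so that $\langle p,q\rangle=1$, and project the vector field onto the critical eigenspace to obtain the scalar complex equation $\dot z=i\omega_0 z+g(z,\bar z)$. Reading off the quadratic and cubic coefficients $g_{20},g_{11},g_{21}$ of this expansion and substituting into the classical expression
\[
\ell_1=\frac{1}{2\omega_0^2}\,\mathrm{Re}\!\left(i\,g_{20}g_{11}+\omega_0\,g_{21}\right)
\]
from \cite{Kuz} yields $\ell_1$. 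A pleasant consequence of the affine structure is that $g_{vvv}$ enters only through the imaginary part of $\omega_0 g_{21}$ and therefore drops out; $\ell_1$ depends solely on $g_{vv}$, $g_{vy}$ and $g_{vvy}$, and in fact reduces to a multiple of $g_{vvy}+g_{vv}g_{vy}/\omega_0^2$.

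The remaining steps are bookkeeping. Differentiating $P$ and evaluating at $v_c$ under the Hopf constraint $\theta_0=(v_c+v_g)^2$ gives $g_{vy}=2\lambda q_g/(v_c+v_g)$ and $g_{vvy}=-6\lambda q_g/(v_c+v_g)^2$; differentiating $R$ twice and using $v_e(v_c)=v_c$ to kill the undifferentiated term gives $g_{vv}$ as an explicit expression in $v_e'(v_c)$ and $v_e''(v_c)$. Substituting these together with $\omega_0^2=\mu q_g(v_e'(v_c)-1)/(v_c+v_g)$ and factoring the common $1/[(v_c+v_g)(v_e'(v_c)-1)]$ recombines the terms into the bracket $\frac{v_e'(v_c)-1}{v_c+v_g}+v_e''(v_c)$ and reproduces (\ref{ell1}).

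The only delicate point---the main obstacle---is the second Taylor coefficient $g_{vv}=R''(v_c)$: one must carry out the quotient-rule expansion of $R(v)=-\mu q_g(v_e(v)-v)/(v+v_g)$ carefully and exploit the cancellations produced both by $v_e(v_c)-v_c=0$ and by the specific value $\theta_0=(v_c+v_g)^2$ hidden in $P$. A sign slip there would not change the qualitative conclusion but would displace the Bautin locus $\ell_1=0$, so this is precisely the computation that must be cross-checked against the final form (\ref{ell1}).
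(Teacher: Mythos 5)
Your proposal is correct and follows essentially the same route as the paper's Appendix~A: translate the equilibrium to the origin, expand via the multilinear forms $B$ and $C$ (which degenerate because the $y$-equation is affine in $y$), project with $q=(1,i\omega_0)^{\top}$ and the normalized adjoint $p$, and apply $\ell_1=\tfrac{1}{2\omega_0^2}\mathrm{Re}(ig_{20}g_{11}+\omega_0 g_{21})$. Your closed form $\ell_1=\tfrac{1}{4\omega_0}\bigl(g_{vvy}+g_{vv}g_{vy}/\omega_0^2\bigr)$ is consistent with the paper's intermediate expressions and is a nice compact restatement of the same computation.
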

The proof is a straightforward computation and is presented in the Appendix~A.

\begin{proposition}

There exists a smooth  function $v_g=h(q_g)$ defined for $0<q_g<q_g^*$ such that $\ell_1(q_g,h(q_g))=0$ and\, $\lim_{q_g\to q_q^*}h(q_g)=v_q^*$. In other words, $\ell_1(q_g,v_g)=0$ is the graph of a function that divides $\Delta$ and has limit point at
 $K=(q_g^*,v_g^*)$, the cusp point of the curve $\gamma$.
\end{proposition}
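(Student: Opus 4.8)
Here is the line of attack I would follow.

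The plan is to reduce $\ell_1=0$ to the vanishing of the single scalar factor that carries its sign, and then to apply the implicit function theorem. Since $q_g>0$, $\omega_0>0$ and $v_c+v_g>0$ throughout $\Delta$, the prefactor $-\lambda\mu q_g^2/(2\omega_0^3(v_c+v_g)^2)$ in (\ref{ell1}) is strictly negative, so on $\Delta$ the condition $\ell_1=0$ is equivalent to $\Phi(q_g,v_g)=0$, where
\[
\Phi(q_g,v_g)=\frac{v_e'(v_c)-1}{v_c+v_g}+v_e''(v_c),
\]
and $v_c=v_c(q_g,v_g)$ is the smooth branch of critical points supplied by the diffeomorphism $\pi_{\mathcal F}\colon\mathcal F\to\Delta$. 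This branch extends continuously to $\gamma=\partial\Delta$, where $v_e'(v_c)=1$, so $\Phi$ extends continuously to $\overline\Delta$ with $\Phi|_{\gamma}=v_e''(v_c)$. Because $\ell_1$ and $\Phi$ have opposite signs, once $\{\Phi=0\}$ is shown to be a graph it automatically divides $\Delta$ into $\{\ell_1>0\}$ and $\{\ell_1<0\}$.

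First I would exploit a structural simplification. For fixed $q_g$ the function $v_e$ depends on $(v,v_g)$ only through $v+v_g$, since $v_e=\tilde{v_e}\bigl(q_g/(v+v_g)\bigr)$; writing $s=v_c+v_g$ and $F(s)=\tilde{v_e}(q_g/s)$ gives $v_e'(v_c)=F'(s)$ and $v_e''(v_c)=F''(s)$, so $\Phi=\psi(s):=(F'(s)-1)/s+F''(s)$ is a function of the single variable $s$. Using (\ref{partial_vg}) one finds $ds/dv_g=\partial v_c/\partial v_g+1=-1/(v_e'(v_c)-1)$, hence
\[
\pder{\Phi}{v_g}=\psi'(s)\,\frac{-1}{v_e'(v_c)-1}.
\]
As $v_e'(v_c)>1$ on $\Delta$, the transversality hypothesis of the implicit function theorem reduces to $\psi'(s)\neq0$, and a short calculation shows that at a zero of $\Phi$ (where $(F'(s)-1)/s=-F''(s)$) one has $\psi'(s)=2v_e''(v_c)/(v_c+v_g)+v_e'''(v_c)$.

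Granting $\psi'\neq0$ along $\{\Phi=0\}$, the remaining steps are routine. For existence on each vertical line I would use the intermediate value theorem: the cusp conditions (\ref{degenerate_BT}) give $v_e''(v_c^*)=0$ with $v_e'''(v_c^*)\neq0$, so $v_e''$ changes sign as $v_c$ crosses $v_c^*$ along the fold; since the two sides of $v_c^*$ project onto the branches $\gamma^+$ and $\gamma^-$, the boundary value $\Phi|_{\gamma}=v_e''(v_c)$ has opposite signs on $\gamma^+$ and $\gamma^-$. Thus for each $q_g\in(0,q_g^*)$ the continuous map $v_g\mapsto\Phi(q_g,v_g)$ changes sign between the lower point on $\gamma^-$ and the upper point on $\gamma^+$, producing a zero; the condition $\psi'\neq0$ makes $\Phi$ strictly monotone in $v_g$ along the line, so the zero $h(q_g)$ is unique and the implicit function theorem renders $h$ smooth. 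Finally, as $q_g\to q_g^*$ the two endpoints of the vertical segment both converge to $K=(q_g^*,v_g^*)$, so the intermediate zero is squeezed and $h(q_g)\to v_g^*$.

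The main obstacle is the global verification of $\psi'(s)=2v_e''(v_c)/(v_c+v_g)+v_e'''(v_c)\neq0$ at every zero of $\Phi$ in $\Delta$, not merely near the cusp. Close to $K$ this is immediate because $v_e''(v_c)\to0$ while $v_e'''(v_c)\to v_e'''(v_c^*)<0$, so $\psi'$ inherits the nonzero sign of $v_e'''(v_c^*)$; away from $K$, however, the two terms need not have the same sign, and controlling them requires the explicit third derivative of the Kerner--Konh\"auser diagram (\ref{FD}) together with the constraint $v_e'(v_c)>1$. I would close this gap by substituting the closed form of $F$ and reducing $\psi'$ to a sign condition on an elementary expression in $s$ and $q_g$, to be checked over the range $0<q_g<q_g^*$.
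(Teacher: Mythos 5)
Your overall strategy is the same as the paper's: strip off the negative prefactor in (\ref{ell1}), reduce $\ell_1=0$ to the vanishing of $\Phi=(v_e'(v_c)-1)/(v_c+v_g)+v_e''(v_c)$, differentiate with respect to $v_g$ using (\ref{partial_vg}), and conclude via monotonicity in $v_g$ plus the implicit function theorem. Your quantity $\psi'(s)=2v_e''(v_c)/(v_c+v_g)+v_e'''(v_c)$ at a zero of $\Phi$ is exactly the bracket the paper obtains in (\ref{three-dogs}) after using the constraint $\ell_1=0$. Two of your additions are genuine improvements over the printed proof: the intermediate-value argument for existence on each vertical line (using that $\Phi|_{\gamma}=v_e''(v_c)$ changes sign between $\gamma^{+}$ and $\gamma^{-}$ because of the cusp conditions (\ref{degenerate_BT})), and the squeeze argument for $\lim_{q_g\to q_g^*}h(q_g)=v_g^*$; the paper instead builds $h$ through a Legendre-transform/global-diffeomorphism construction and is silent on both of these points.

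The genuine gap is the one you flag yourself: you never establish $\psi'\neq 0$ along $\{\Phi=0\}$, and you propose to retreat to an explicit computation with the closed form of (\ref{FD}). The paper closes this without any such computation, by two observations. First, along $\ell_1=0$ one automatically has $v_e''(v_c)=-(v_e'(v_c)-1)/(v_c+v_g)<0$ since $v_e'(v_c)>1$ on $\Delta$, so the term $2v_e''(v_c)/(v_c+v_g)$ is strictly negative there (you implicitly have this, since it is just $\Phi=0$ rewritten). Second, the paper argues that the sigmoidal shape of $v\mapsto v_e(v)$ forces $v_e''$ to be decreasing, hence $v_e'''(v_c)<0$, so both summands of $\psi'$ are negative and transversality holds globally, not just near $K$. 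That second claim is the real content you are missing, and it is worth noting that the paper's own justification of it is only a qualitative appeal to the shape of the fundamental diagram (a generic sigmoid has $f'''$ changing sign in its tails, so the assertion really relies on restricting to the region where $v_e'(v_c)>1$); your instinct that this is the delicate step is therefore sound, but as written your proof is incomplete precisely where the paper supplies its one nontrivial sign argument.
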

\begin{proof}
Observe that from definition (\ref{ve}) it follows that
\begin{equation}\label{derivs}
\pder{v_e(v_c)}{v_g}=\left(1+\pder{v_c}{v_g}\right)v_e'(v_c),\quad \pder{v_e'(v_c)}{v_g}=
\left(1+\pder{v_c}{v_g}\right)v_e''(v_c),
\end{equation}
and so forth. From the expression for $\ell_1$ in (\ref{ell1}) we compute
\begin{eqnarray*}
\lefteqn{\left.\pder{\ell_1(q_g,v_g)}{v_g}\right|_{\ell_1=0}}\\
&=&
-A\left(
\frac{\pder{v_e'(v_c)}{v_g}}{v_c+v_g}+(v_e'(v_c)-1)
\left(-\frac{1}{(v_c+v_g)^2}-\frac{1}{(v_c+v_g)^2}\pder{v_c}{v_g}\right)
+\pder{v_e''(v_c)}{v_g}
\right)\\
&=&
-A\left(
\frac{\pder{v_e'(v_c)}{v_g}}{v_c+v_g}-
\frac{(v_e'(v_c)-1)}{(v_c+v_g)^2} \left( 1+\pder{v_c}{v_g}\right)
+\pder{v_e''(v_c)}{v_g}
\right)
\end{eqnarray*}
where
$$
A= \frac{\lambda\mu q_g^2}{2\omega_0^3(v_c+v_g)^2}
$$
is a positive quantity.
Using (\ref{derivs}) we get
\begin{eqnarray}
\lefteqn{\left.\pder{\ell_1(q_g,v_g)}{v_g}\right|_{\ell_1=0}}\nonumber \\
&=&-A\left(
\frac{v_e''(v_c)}{v_c+v_g}-\frac{v_e'(v_c)-1}{(v_c+v_g)^2}+v_e'''(v_c)
\right)\left( 1+\pder{v_c}{v_g}\right)\nonumber\\
&=&
A\left(
\frac{v_e''(v_c)}{v_c+v_g}-\frac{v_e'(v_c)-1}{(v_c+v_g)^2}+v_e'''(v_c)
\right)\left( \frac{1}{v_e'(v_c)-1}\right),\label{three-dogs}
\end{eqnarray}
where we have used (\ref{partial_vg}).
We now analyze the sign of each term in the second factor:
for the first term, observe that
along $\ell_1=0$,
$$
v_e''(v_c)=-\frac{v_e'(v_c)-1}{v_c+v_g}<0.
$$
The second  term is negative since $v_e'(v_c)-1>0$.
For the third term, recall  that for  fixed values of $q_g$,  $v_g$, $v_e(v)$ is sigmoidal  \cite{Ca}; therefore,  its graph is monotone increasing and the concavity changes from convex to concave passing trough a unique point of inflection. Then the second derivative passes from $v_e''>0$ to $v_e''<0$. Thus $ v_e''(v)$ is decreasing. In particular, $v_e'''(v_c)<0$.
Therefore, the second factor  in (\ref{three-dogs}) is negative.  Since the first and second factors
are negative we conclude that
\begin{equation}\label{dl1}
\pder{\ell_1(q_g,v_g)}{v_g}<0
\end{equation}
whenever $\ell_1(q_g,v_g)=0$.

Define the  Lagrangian
$$
L(q_g,v_g)=-\int_{v_g^0}^{v_g} \ell_1(q_g,s)\,ds
$$
and the associated Legendre transform
$$
\mathcal{L}(q_g,v_g)=(q_g,p),\quad\mbox{where}\quad p=\pder{L}{v_g}(q_g,v_g)
$$
then its is immediate that $\mathcal{L}$ is inyective: If $\mathcal{L}(q_g,v_g)=(q_g',v_g')$ then $q_g=q_g'$ and
$$
\pder{L}{v_g}(q_g,v_g)=\pder{L}{v_g}(q_g,v_g')
$$
that is $\ell_1(q_g,v_g)=\ell_1(q_g,v_g')$; by monotonicity this implies $v_g=v_g'$. The Jacobian determinant of $\mathcal{L}$ is given by
$$
\left|
\begin{array}{cc}
1 & 0 \\
\frac{\partial^2 L}{\partial q_g\partial v_g} & \frac{\partial^2 L}{\partial v_g^2}
\end{array}\right|
=
\frac{\partial^2 L}{\partial v_g^2}=-\ell_1(q_g,v_g)
 >0,
$$
from (\ref{dl1}). Thus $\mathcal{L}$ is a global diffeomorphism onto its image. Let the inverse mapping be denoted as
$$
(q_g,v_g)= (q_g,\mathcal{H}(q_g,p))
$$
then, by definition
$$
p=\ell_1(q_g,\mathcal{H}(q_g,p)),
$$
setting $p=0$ we get
$$
0=\ell_1(q_g,\mathcal{H}(q_g,0)).
$$
This completes the proof by setting $v_g=h(q_g)=\mathcal{H}(q_g,0).$
\end{proof}

\noindent We call $L_1$ the curve defined by $\ell_1(q_g,v_g)=0$.

From the last proposition it follows that  the cuspidal region $\Delta$ is  divided in two components by the graph of  $L_1$. We are now able to determine the regions where $\ell_1>0$ and $\ell_1<0$. 

\begin{proposition}\label{ell1_positive}
The first Lyapunov coefficient $\ell_1(q_g,v_g)$ is positive  for the lower cuspidal region $\Delta^{-}$,
and it is negative  for the upper cuspidal region $\Delta^{+}$.
\end{proposition}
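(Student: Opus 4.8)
The plan is to reduce the claim to a one–variable sign analysis along each vertical line $q_g=\text{const}$ inside $\Delta$, exploiting two facts already in hand. First, by the preceding proposition the zero set of $\ell_1$ inside $\Delta$ is exactly the graph $L_1=\{v_g=h(q_g)\}$, so on each such line $\ell_1$ vanishes at most once. Second, inequality (\ref{dl1}) states that wherever $\ell_1$ vanishes its $v_g$–derivative is strictly negative. Together these should pin down the sign of $\ell_1$ on either side of $L_1$ without any further computation with the explicit formula (\ref{ell1}).

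Concretely, I would fix $q_g\in(0,q_g^*)$ and set $g(v_g)=\ell_1(q_g,v_g)$, viewed as a smooth function on the slice $I_{q_g}=\{v_g:(q_g,v_g)\in\Delta\}$; the cuspidal shape of $\Delta$ makes $I_{q_g}$ an interval. By the preceding proposition $g$ has the single zero $v_g=h(q_g)$ in $I_{q_g}$, and by (\ref{dl1}) we have $g'(h(q_g))<0$. A smooth function on an interval whose only zero is a point of strictly negative slope must be positive to the left of that point and negative to the right: $g$ decreases through the zero, which fixes the signs immediately nearby, and since there is no second zero the intermediate value theorem forbids any further change of sign on either side. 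Hence $g(v_g)>0$ for $v_g<h(q_g)$ and $g(v_g)<0$ for $v_g>h(q_g)$.

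It then remains to identify the regions with the two sides of $L_1$. By definition $\Delta^-$ is the component of $\Delta\setminus L_1$ lying below $L_1$, that is $\Delta^-=\{(q_g,v_g)\in\Delta:v_g<h(q_g)\}$, and $\Delta^+$ the component above, consistently with the labelling of $\gamma^\pm$. The slice-wise conclusion then yields $\ell_1>0$ on $\Delta^-$ and $\ell_1<0$ on $\Delta^+$, as claimed. As a cross–check one may argue directly on the factor $T=(v_e'(v_c)-1)/(v_c+v_g)+v_e''(v_c)$ appearing in (\ref{ell1}): since its prefactor there is negative, (\ref{dl1}) forces $\partial T/\partial v_g>0$ along $L_1$, so $T<0$ below and $T>0$ above $L_1$, which reproduces the same signs of $\ell_1$.

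The only genuine subtlety—the point I expect to be the main obstacle—is the \emph{global} propagation of sign: the local information at $L_1$ must extend across each entire region rather than just a neighborhood of the curve. This is exactly what the connectedness of the slice $I_{q_g}$, together with the fact that $L_1$ is the \emph{full} zero set of $\ell_1$ in $\Delta$ (and not merely a local branch), supplies through the intermediate value theorem. Since both of these inputs are established in the preceding proposition, they may be invoked directly and the argument closes.
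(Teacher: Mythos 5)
Your argument is correct and follows essentially the same route as the paper: both use inequality (\ref{dl1}) to fix the sign of $\ell_1$ immediately on either side of the zero set $L_1$, and then propagate that sign globally using continuity together with the fact that $L_1$ is the entire zero set (the paper via connectedness of $\Delta^{\pm}$, you via connectedness of each vertical slice, which is the same idea). No substantive difference.
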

\begin{proof}
From the previous Proposition it follows that
$$
\pder{\ell_1(q_g,v_g)}{v_g}<0.
$$
Take a point $(q_g,v_g)\in L_1$, therefore $\ell_1(q_g,v_g)=0$. Since $ \ell_1(q_g,v_g)$ is decreasing with respect to $v_g$, it follows that $\ell_1(q_g,v_g+\delta)<0$  for small $\delta>0$, but $(q_g,v_g+\delta)\in\Delta^+$ which is connected;  therefore, $\ell_1(q_g,v_g)<0$ for all $(q_g,v_g)\in\Delta^+$.
By continuity of $\ell_1$ in $\Delta$,
$\ell_1$ is positive in $\Delta^{-}$.
\end{proof}

In Figure~\ref{diagram1} the regions $\Delta^{\pm}$  are delimited by the  corresponding curves  
$\gamma^{\pm}$ and $L_1$.
In Figure~\ref{diagram1-left}, the dashed curve interpolates  a number of  points computed numerically with Matcont,  where $\ell_1=0$ (see Section~\ref{numerical}). In Figure~\ref{diagram1-right}, the same set of points and the curve  
$L_1$,  as given by expression (\ref{ell1}), are plotted showing a remarkable fitting.

\subsection{Degenerate Takens-Bogdanov bifurcation}
Among codimension three bifurcation that have been study, degenerate Takens--Bogdanov bifurcation is relevant to this paper.   The monograph of Dumortier, Roussarie,  Sotomayor \& \.Zol\c{a}dek \cite{Du} is the main reference to our work.

 Our presentation follows closely  \cite{Kuz-I}.
  Whenever a system of the form $x'=f(x,\alpha)$, $x,\alpha\in\R^2$, with $f(0,0)=0$, $A=f_x(0,0)$ has a double zero eigenvalue with non--semisimple Jordan form, then the ODE is formally smooth equivalent to
\begin{eqnarray}\label{smooth-equivalent}
\dot{w}_0 &=& w_1,\\
\dot{w}_1 &=&  \sum_{k\geq2}\left(a_k w_0^k + b_k w_0^{k-1}w_1\right).
\end{eqnarray}

In the non--degenerate case $a_1b_2\neq0$, the universal unfolding is  the well known Takens-Bogdanov system. When $a_2=0$ but $a_3b_2\neq0$,   the system
is smoothly orbitally equivalent  to
\begin{eqnarray}\label{orbitally-equivalent}
\dot{w}_0 &=& w_1,\\
\dot{w}_1 &=& a_3 w_0^3+b_2w_0w_1+ b_3' w_0^2 w_1 + O(||(w_0,w_1)||^5).
\end{eqnarray}
There appear three inequivalent cases
\begin{itemize}
\item When $a_3>0$, it is called  the saddle case.
\item When $a_3<0$, $b_2^2+8a_3<0$ and $b_3'\neq0$, it is called focus case.
\item When $a_3<0$ and $b_2^2+8a_3>0$, it is called the elliptic case.
\end{itemize}
According to \cite{Kuz-I} in all cases, a universal unfolding is given by
\begin{eqnarray}
\dot{\xi}_0 &=& \xi_1, \nonumber\\
\dot{\xi}_1 &=& \beta_1+\beta_2\xi_0+\beta_3\xi_1 + a_3\xi_0^3+b_2\xi_0 \xi_1 +b_3' \xi_0^2\xi_1.
\label{UniversalUnfoldingDBT}
\end{eqnarray}
An equivalent bifurcation diagram, after a re-scaling, is presented in \cite{Du}.

\begin{theorem}\label{DTB}
Let $v_c$ a critical point of (\ref{KKode}) that satisfies $v_e(v_c)=v_c,$ $v_e'(v_c)=1,$ $v_e''(v_c)=0$ but $v_e'''(v_c)<0.$ If $\theta_0=(v_c +v_g)^2$ is chosen then this point corresponds to a degenerate Takens-Bogdanov point whose bifurcation diagram is the saddle case.
\end{theorem}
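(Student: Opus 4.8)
The plan is to reduce system (\ref{KKode}) to the normal form (\ref{smooth-equivalent}) at the cusp point $K=(q_g^*,v_g^*,v_c^*)$ and verify that the degeneracy $a_2=0$ occurs while $a_3>0$, which by the classification following (\ref{orbitally-equivalent}) is precisely the saddle case. First I would fix $\theta_0=(v_c+v_g)^2$ so that the trace $b$ of the linearization $A_0$ vanishes, and simultaneously impose $v_e'(v_c)=1$ so that $c=0$; then $A_0$ has the double-zero eigenvalue with the non-semisimple Jordan block $\left(\begin{smallmatrix}0&1\\0&0\end{smallmatrix}\right)$ built into its very definition. This establishes that we are at a point where the Takens--Bogdanov reduction applies, so the system is formally smoothly equivalent to (\ref{smooth-equivalent}).

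Next I would compute the Taylor expansion of the right-hand side of (\ref{KKode}) about $(v_c,0)$ in the shifted coordinate $u=v-v_c$. Since $b=0$ at $K$, the first coordinate is already $\dot u = y$, and the second coordinate becomes a function whose $y$-independent part is $-\mu q_g(v_e(v)-v)/(v+v_g)$ expanded in $u$. The coefficients $a_k$ in (\ref{smooth-equivalent}) are, up to a positive normalizing factor coming from $\mu q_g/(v_c+v_g)$, governed by the Taylor coefficients of $v_e(v)-v$ at $v_c$, i.e. by $v_e''(v_c)$, $v_e'''(v_c)$, and so on. The defining conditions (\ref{degenerate_BT}) give $v_e'(v_c)-1=0$ and $v_e''(v_c)=0$, which force $a_2=0$; and the remaining hypothesis $v_e'''(v_c)<0$ should, after tracking the sign of the normalizing factor, yield $a_3>0$. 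I would then check $b_2\neq 0$ (the $w_0 w_1$ coefficient), which is inherited from the $v$-dependence of the damping term $\lambda q_g[1-\theta_0/(v+v_g)^2]y$ and is controlled by the transversality condition in (\ref{ND}); this guarantees $a_3 b_2\neq 0$ so that (\ref{orbitally-equivalent}) is valid.

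The main obstacle will be the sign bookkeeping in passing from the raw Taylor coefficients of the vector field to the normalized coefficients $a_3$ and $b_2$ of (\ref{orbitally-equivalent}). The smooth orbital equivalence that brings (\ref{smooth-equivalent}) into the form (\ref{orbitally-equivalent}) involves a rescaling of time and of the state variables, and these rescalings can alter signs; one must verify that the net effect preserves $a_3>0$ rather than flipping it, which is exactly the dichotomy between the saddle case and the focus/elliptic cases. I would handle this by invoking the explicit normal-form transformation from \cite{Kuz-I} and carrying the factor $\mu q_g/(v_c+v_g)>0$ through it, so that the sign of $a_3$ is pinned to the sign of $-v_e'''(v_c)>0$. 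Once $a_3>0$ and $a_3 b_2\neq 0$ are established, the classification cited after (\ref{orbitally-equivalent}) identifies the point as a saddle-type degenerate Takens--Bogdanov bifurcation, and the universal unfolding (\ref{UniversalUnfoldingDBT}) together with the diagram in \cite{Du} completes the statement.
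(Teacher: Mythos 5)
Your proposal follows essentially the same route as the paper's Appendix~B: shift to $w_1=v-v_c$, Taylor-expand, observe that the choice $\theta_0=(v_c+v_g)^2$ kills the constant term of the damping factor so that the friction term contributes $b_2w_1w_2+\dots$ with $b_2=2\lambda q_g/(v_c+v_g)\neq0$, that the hypotheses $v_e(v_c)=v_c$, $v_e'(v_c)=1$, $v_e''(v_c)=0$ force $a_2=0$, and that $a_3=-\mu q_g v_e'''(v_c)/\bigl(6(v_c+v_g)\bigr)>0$, which is the saddle case. One minor correction: $b_2\neq0$ follows directly from this explicit formula and does not rest on the conditions (\ref{ND}) — indeed the first of those, $v_e''(v_c)\neq0$, fails at the cusp point by hypothesis.
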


The proof is given in the Appendix~B.

\noindent The bifurcation diagram of the universal unfolding (\ref{UniversalUnfoldingDBT}) is given in Dumorter et al., see~\cite{Du}. A  sketch is shown in Figure~\ref{lips} keeping their notation.
\begin{figure}[ht]
\centering
  \includegraphics[width=2in]{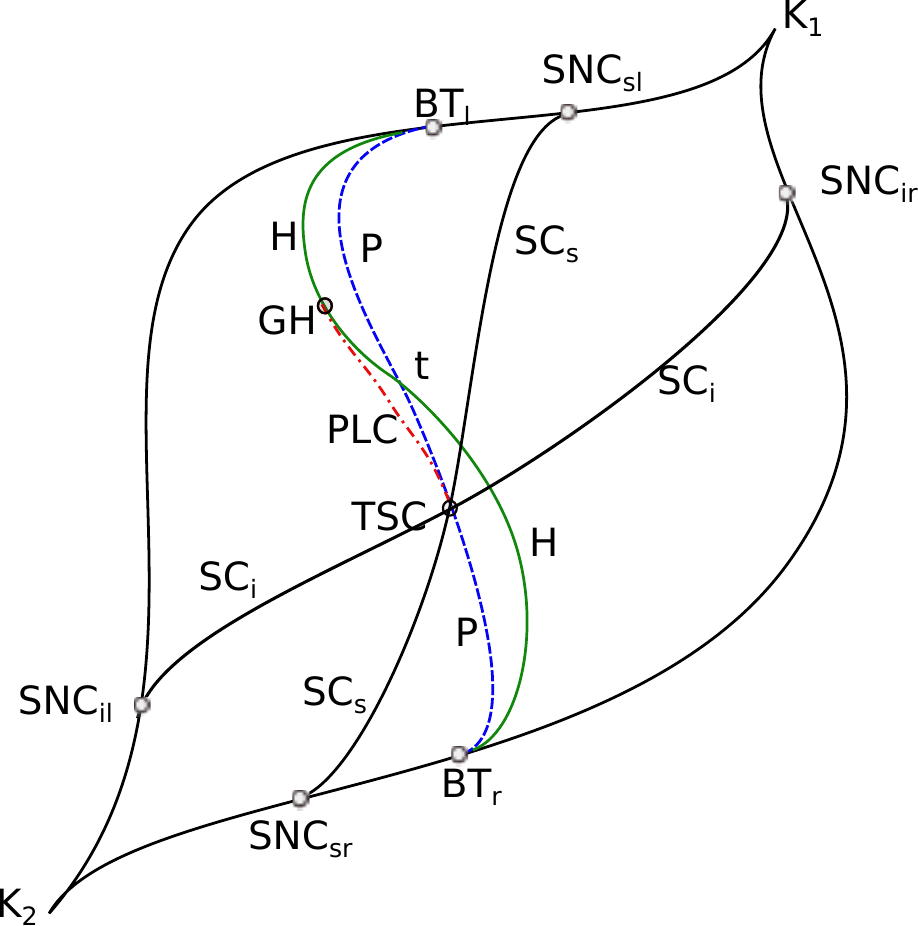}
  \caption{Sketch of the degenerate Takens--Bogdanov bifurcation diagram. Notation is: BT: Takens-Bogdanov; H: Hopf; GH: Bautin; P: homoclinic: PLC: saddle-node  cycle; TSC: two saddle connections; SC: saddle connection; SNC: saddle--node connection. The subindices mean
  s: superior, i: inferior, l: left, r: right, and describe the position of the bifurcation in the phase plane.}
  \label{lips}
  \end{figure}

The description is as follows:  within the  lips--shaped region there exists three critical points, two saddles and an interior focus or node. In the outer part of the lips,  there exists exactly one saddle. The two regions are separated by a closed curve formed either by BT points (if we choose the value of the parameter $\theta_0=\sqrt{v_c+v_g}$) otherwise by saddle--nodes.

If we start with the  (left) Takens--Bogdanov point BTl in the left part of the curve,  there are two branches of homoclinic and Hopf bifurcating from it, according to Takens-Bogdanov theorem \cite{Kuz}. The homoclinic curve of bifurcation  P,  the dotted line in blue,  continues up to a point TSC,   and terminates in a second BTr point, in the right part of the curve.   The BTl point arises when the left saddle in phase space coalesce with the focus/node.  At the BTr point the right saddle coalesce with the focus/node.
 TSC  is also a point of intersection of  two curves bifurcating from  saddle--node connection points, named (superior left) SNCsl  and (inferior right) SNCir, which intersect precisely at TSC. They continue separately ending up at two different saddle--node connection points named (superior right) SNCsr,  and (inferior left) SNCil, respectively. The curve joining the points  SNCsl  and SNCsr is named SCs; the curve  connecting the points  SNCir,  and SNCil is named SCi.    SCs and SCi are curves of saddle---saddle connections, connecting two saddles in  phase space  by a regular curve connecting a saddle point and a saddle-node.

 The Hopf curve of bifurcating from the BT point continues up to a Bautin point named GH (generalized Hopf), and continues as a Hopf curve that ends in the same BT point as the previous described homoclinic curves of bifurcation.

 There is a segment line connecting the TSC point, and the GH point, marked as a dot--line red curve, denoted by LPC. This curve consists of saddle-node cycle. This curve is the same as the local curve LPC in the local diagram of the Bautin bifurcation in Figure~\ref{Bautin}. When we cross PLC from the exterior of the triangular region GH-t-TSC,  an hyperbolic saddle becomes a saddle-node cycle, and bifurcates into two limit cycles --one unstable and the other stable-- just as the local diagram of the Bautin bifurcation in Figure~\ref{Bautin}.

 \section{Dynamical consequences in the PDE}
In order to obtain a particular solution of  system (\ref{continuity}), (\ref{balance}) initial and boundary conditions must be given. Let  $f(x)$, $g(x)$ be smooth functions such that  $V(x,0)=f(x),$ and  $\rho(0,t)=g(x).$ We discuss two types of boundary conditions: (a) periodic in a finite road and, (b) bounded in an unbounded road. More precisely for type (a),    for $0<x<L$ we consider
 the boundary conditions
 \begin{equation}\label{periodic}
 V(0,t)=V(L,t),\quad \rho(0,t)=\rho(L,t)\quad\mbox{for all $t>0$.}
 \end{equation}
 For type (b),  we consider
 the boundary conditions
 \begin{equation}\label{unbounded}
 V(x,t)\quad\mbox{and}\quad  \rho(x,t) \quad\mbox{remain bounded as $x\to\pm\infty$ for all $t>0.$}
 \end{equation}
Of course type (b) boundary conditions can only be approximated numerically by a sufficient long finite road, but they are interesting to discuss for theoretical purposes.

The solutions of interest, arising from the dynamical system (\ref{KKode}),  can be classified according to  the Poincar\'e--Bendixon theorem as:
 \begin{enumerate}
\item Critical points.
\item Limit cycles.
\item Cycles of critical points and homoclinic orbits.
  \begin{enumerate}
  \item Homoclinic connections.
  \item Heteroclinic connections.
  \end{enumerate}
\end{enumerate}

 \subsection{Critical points}
 Critical points, $v_e'(v_c)=v_c$, give rise to homogeneous solutions for both types of boundary conditions (a) and (b).  Under the change of variables (\ref{adim}), critical points  are given by a pair of values $(\rho_0, V_0)$ in the graph of the fundamental diagram: $V_0=V_e(\rho_0)$.  The linear stability is given according to Proposition~\ref{stability} .
 In \cite{Saa-Ve},  type (a) boundary conditions were considered. It was shown, numerically, that if the homogeneous solution is linearly  unstable in the PDE then it evolves, under a small perturbation, into a traveling wave solution. This observed behavior can be partially explained by the dynamical system (\ref{KKode}) as follows: consider an unstable critical point  of the focus type with parameters $(q_g,v_g)$ within  the cuspidal region $\Delta$, surrounded by  a stable limit cycle. This scenario takes place whenever a Hopf bifurcation with negative Lyapunov coefficient takes place. Then by a small perturbation of the initial condition near the critical point, solutions evolve along the unstable spiral towards the stable cycle.

 \subsection{Homoclinic and heteroclinic connections}
 Homoclinic solutions are associated to saddle points, located to the left or right in the $v$ direction of phase space $v$--$y$ of system (\ref{KKode}). This kind of solutions correspond to  one--bump traveling wave solutions with the same horizontal asymptotes as $\xi\to\pm\infty$ (see Figure~\ref{clinics}  left). The family of homoclinic orbits described in Section~\ref{homoclinics} are accumulation points of limit cycles. If it is accumulated by unstable cycles, then the homoclinic presents a ``two sided" stability behavior: it is stable for initial conditions within the annular region defined by  the unstable limit cycle and the homoclinic, but it is unstable for initial conditions outside the limit cycle.  This poses the possibility that an unstable traveling wave would evolve towards a one--bump traveling wave in the PDE by a proper small perturbation.

 If the homoclinic is accumulated by stable limit cycles, then it is always unstable.  Heteroclinic orbits are interpreted similarly, and give rise to traveling fronts as shown in Figure~\ref{clinics}.

 \subsection{Double saddle connection}
This is a  codimension three phenomenon. As explained in the bifurcation diagram of Figure~\ref{lips},  for a fixed value of $\theta_0$,  a double saddle connection  is determined  as the intersection of  two lines of saddle-node (homoclinic) connections. In the PDE there coexist, for the same value of the parameters, two front traveling waves as shown in Figure~\ref{clinics}.
\begin{figure}[htbp]
\begin{center}
\includegraphics[width=2.5in]{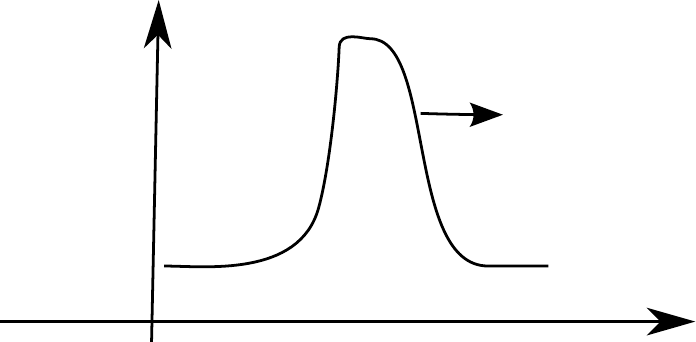}\quad
\includegraphics[width=2.5in]{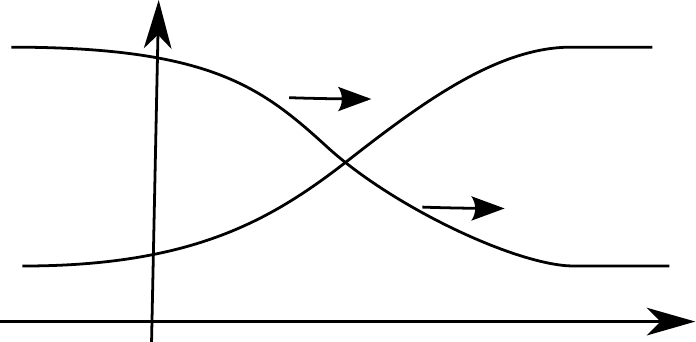}
\caption{One-bump and  coexisting front traveling wave solutions, corresponding to a homoclinic (left) and a double saddle connection (right).\label{clinics}}
\end{center}
\end{figure}

 \subsection{Heteroclinic connection between two limit cycles}
 This type of solutions arise within the triangular region of parameters GH-t-TSC shown in Figure~\ref{lips}, where two limit cycles, one unstable the other stable,   and  the annular region in between contains a double asymptotic spiral. An orbit of this type correspond to an increasing in amplitude oscillating traveling solution as shown in Figure~\ref{twocycles}.
 \begin{figure}[htbp]
\begin{center}
\includegraphics[width=4in]{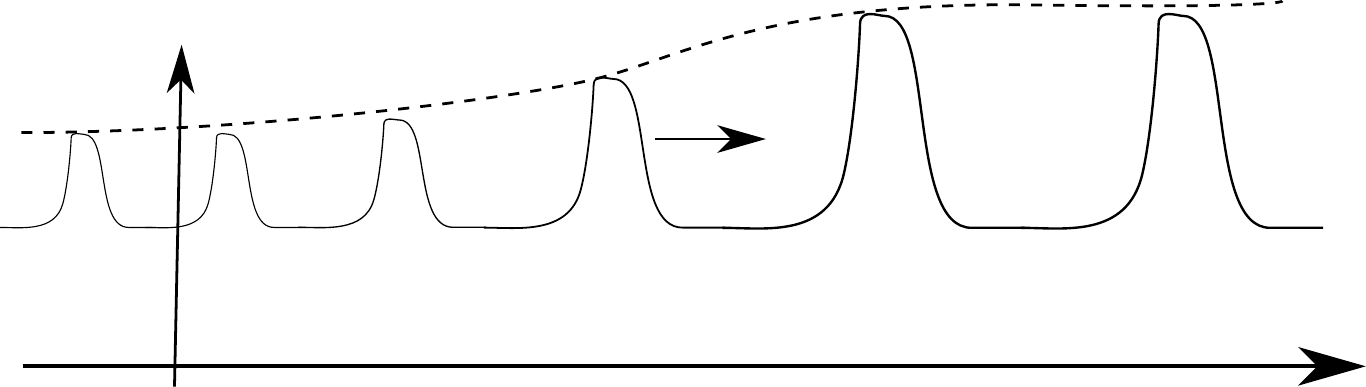}
\caption{Heteroclinic connecting two limit cycles give rise to increasing in amplitude traveling solution.\label{twocycles}}
\end{center}
\end{figure}

\section{Periodic boundary conditions}\label{numerical}
For periodic boundary conditions in a bounded road of length $L$, only periodic solutions of (\ref{KKode})  that satisfy the condition
\begin{equation}\label{resonance}
L\rho_{max}= mT,
\end{equation}
 for some positive integer $m,$  where $T$ is the period of the limit cycle, give rise to  traveling wave solutions,  see \cite{Ca1}.   If $T$ is the minimal period, then we call $L_0=T/\rho_{max}$ the minimal road length. Then
by considering  a limit cycle of minimal period $T$ as a limit cycle of  period $mT$ yields a traveling wave solution in a road of length $mL$. In this way one can obtain multiple bump- traveling waves in the PDE.

The following result characterizes the shape of traveling wave solutions of minimal period in a road of minimal length.
\begin{proposition}
Let $T$ be the minimal period of a limit cycle and consider a road of minimal length $L_0$. Then the corresponding traveling wave solution has exactly one minimum and one maximum.
\end{proposition}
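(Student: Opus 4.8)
The plan is to reduce the statement to a count of how often the limit cycle meets the line $y=0$ in the phase plane of (\ref{KKode}), and then to show that this number is exactly two. First I would record the effect of the resonance condition (\ref{resonance}): for a road of minimal length together with a limit cycle of minimal period $T$ one has $m=1$, so as $x$ runs over $[0,L_0]$ the rescaled variable $z=\rho_{max}\xi$ sweeps an interval of length $\rho_{max}L_0=T$, and the traveling-wave profile is exactly one turn around the cycle. Since $V=V_{max}\,v$ and $\rho=\rho_{max}\,q_g/(v+v_g)$ are, respectively, a positive multiple and a strictly monotone function of $v$, the extrema of the profile coincide with those of $v(z)$, and a critical point of $v(z)$ occurs precisely where $y=dv/dz=0$. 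Thus it suffices to prove that the cycle crosses $\{y=0\}$ at exactly two points.

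Next I would analyze the nature of the crossings. Writing $g(v)=v_e(v)-v$ for short, at any point of the cycle with $y=0$ equation (\ref{KKode}) gives $v''=-\mu q_g\,g(v)/(v+v_g)$; because orbits avoid equilibria, $g(v)\neq0$ there, so every crossing is a nondegenerate extremum of $v(z)$ --- a maximum when $g(v)>0$ and a minimum when $g(v)<0$. The key structural fact is that $dv/dz=y$, so along the flow $v$ is \emph{strictly increasing} in the open upper half-plane and strictly decreasing in the lower one. Consequently each connected arc of the cycle lying in the upper half-plane is a monotone graph running from a minimum (its left endpoint) to a maximum (its right endpoint), and symmetrically for the lower arcs.

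The hard part is to rule out extra oscillations, i.e.\ more than one maximum--minimum pair. Here I would use that in the open upper half-plane the cycle solves the scalar equation $\frac{dy}{dv}=\frac{1}{y}\frac{dy}{dz}$, for which solutions are unique; hence two distinct upper arcs cannot cross and must be nested, the widest one, $U_1$, having as endpoints the globally leftmost and rightmost points $(v_{\min},0)$ and $(v_{\max},0)$ of the cycle. Each of these is attained at a single point, since a leftmost or rightmost point of a smooth closed orbit forces a vertical tangent, hence $y=0$, and two such points with the same $v$-value would coincide. The same nesting holds for the outermost lower arc $L_1$, which also joins $(v_{\min},0)$ to $(v_{\max},0)$. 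Tracing the orbit then closes it immediately: leaving $(v_{\max},0)$ downward one follows $L_1$ monotonically to $(v_{\min},0)$, and leaving $(v_{\min},0)$ upward one follows $U_1$ back to $(v_{\max},0)$. Thus $U_1\cup L_1$ is already a complete orbit; since the limit cycle is a single connected orbit it must equal $U_1\cup L_1$, which meets $\{y=0\}$ only at $(v_{\min},0)$ and $(v_{\max},0)$.

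Finally I would assemble the conclusion: the cycle has exactly two crossings with $y=0$, one maximum and one minimum of $v(z)$, so over the minimal road the profiles of $V$ and of $\rho$ each attain exactly one maximum and one minimum. I expect the only delicate points to be the uniqueness/nesting step and the verification that $v_{\min}$ and $v_{\max}$ are each attained at a single point; the sign analysis of $g$ (which, via Proposition~\ref{stability}, places minima where $v<v_c$ and maxima where $v>v_c$, consistent with the cycle encircling only the interior focus) can be invoked as a sanity check but is not strictly needed once the nesting argument closes the orbit.
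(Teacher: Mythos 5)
Your proposal takes essentially the same approach as the paper: the extrema of the profile are exactly the transversal crossings of the limit cycle with the $v$--axis, and there are exactly two of them. The paper's entire proof is that one sentence --- ``a limit cycle crosses transversally the $v$--axis exactly twice'' --- so everything you add (transversality from the non-vanishing of $v_e(v)-v$ along the cycle, and the monotonicity/nesting argument showing a simple closed orbit of (\ref{KKode}) meets $\{y=0\}$ only at its leftmost and rightmost points) is justification that the paper merely asserts. The one soft spot is the step ``cannot cross and must be nested'': two disjoint upper arcs could in principle be graphs over \emph{disjoint} $v$--intervals, a configuration your uniqueness-of-$dy/dv$ argument does not touch; to exclude it you need the extra observation that the lower arc leaving a maximum must travel leftward to a minimum lying strictly to its left, which forces consecutive arcs' intervals to overlap (hence, by your crossing argument, to be nested) and makes the orbit close up after a single upper and a single lower arc. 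This is a small, fixable gap, and on the whole your write-up is more rigorous than the paper's own proof.
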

\begin{proof} According to (\ref{KKode})  a limit cycle crosses transversally the $v$--axis exactly twice. These are the minimum and maximum of $v(z)$.
\end{proof}
This result says that traveling wave solutions of minimal period  in a road of minimal length are one--bump traveling waves.

\bigskip
In the rest of the  section we  compute the global bifurcation diagram inside the cuspidal region in the parameter space $q_g$--$v_g$. We use Matcont to extend numerically, the local curves of bifurcations given by the  Takens--Bogdanov theorem, namely Hopf and homoclinic curves.  We present in detail the continuation of limit cycles from Hopf points which give rise to periodic orbits of fixed period,  that correspond to traveling wave solutions in the PDE.  For each BT point we found a GH bifurcation when continuing Hopf curves, that constitute a complete family of Bautin bifurcations, which
 are given by the condition $\ell_1=0$ that is numerically verified.

The presence of Bautin bifurcations found in this study are consistent with the global bifurcation diagram presented in \cite{Du}, and in fact are justified by Theorem \ref{DTB}.

For the Kerner-Kornh\"auser fundamental diagram we use
the following parameter values:
$$
\rho_{max}= 140\, veh/km,\quad V_{max}= 120\, km/h,\quad \tau=30\ seg,\quad \eta_0=600\ km/h,
$$
$$
\lambda=\frac{1}{5}=0.2,\qquad \mu=\frac{1}{700}=0.00142857.
$$

\subsection{Cusp point}
With these values one can show that there exist a unique critical point that satisfies the hypotheses $v_e(v_c)=v_c$, $v_e'(v_c)=1$, $v_e''(v_c)=0$  of Theorem \ref{DTB},  given by
$$
q_g=0.316762381,\quad
v_g= 0.752937578,\quad
v_c = 0.300464598,\quad
\theta=1.109656146,
$$
and $v_e'''(v_c)=-11.317691591012832<0.$

\subsection{The Hopf curves}\label{Hopf_curves}
In Figure \ref{diagram1}, we show the continuation of Hopf curves from several BT points taken on the lower branch of the cuspidal curve. Amid each  continuation, a GH point is found, and we show with a dotted line  the  interpolated curve passing through these points. When $\ell_1(q_g,v_g)=0$ is plotted, a remarkable fit is shown. According to Proposition~\ref{ell1_positive}, the  Hopf points that are located below the GH--curve ($\Delta^{-}$)  have positive Lyapunov coefficient, while those located above ($\Delta^{+}$) have a negative exponent, therefore limit cycles which bifurcate from Hopf points in this region are stable.

\begin{figure}
\centering
 \subfloat[]{\label{diagram1-left} \includegraphics[width=2.5in,height=2.3in]{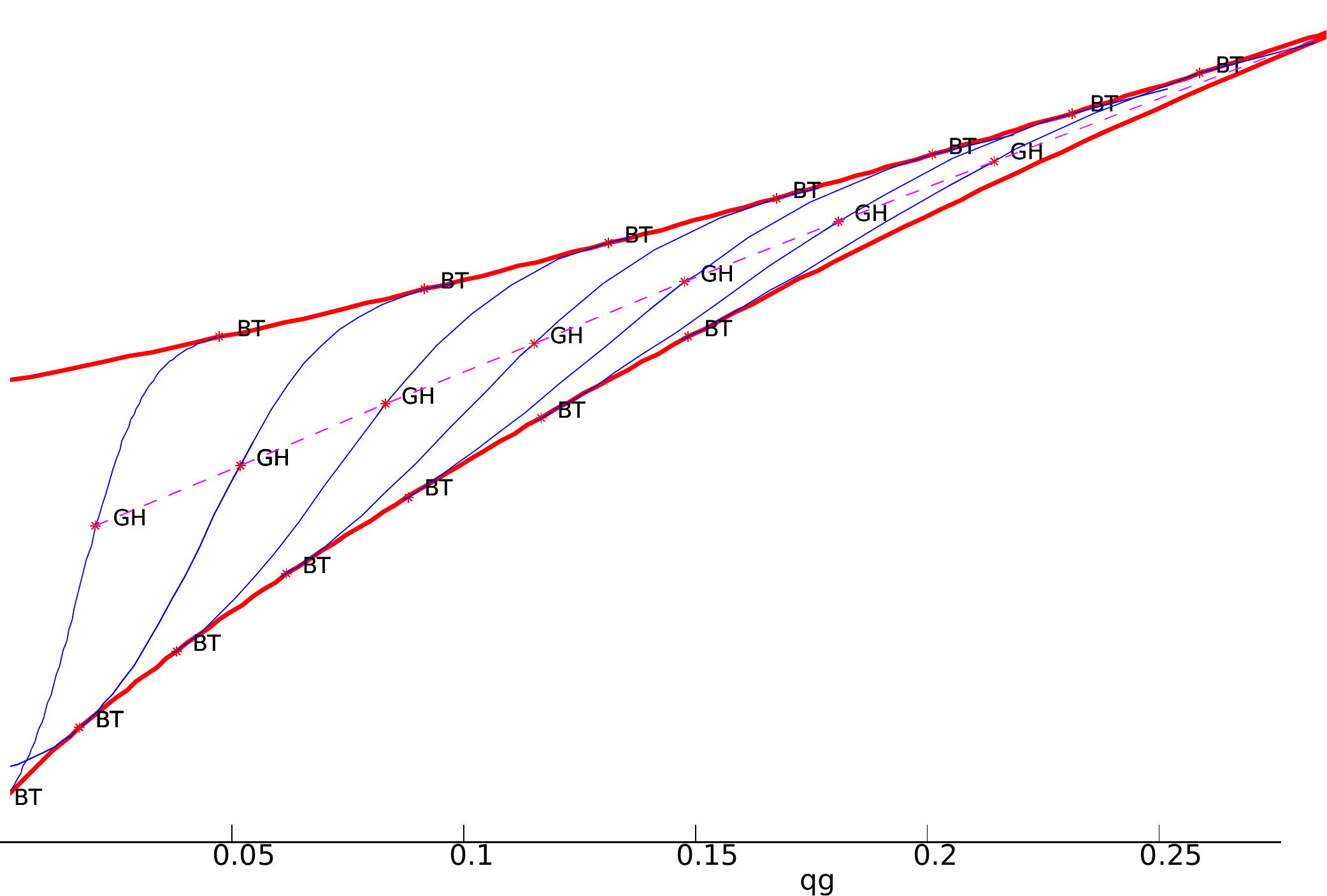}}\qquad
  \subfloat[]{\label{diagram1-right}\includegraphics[width=2.5in]{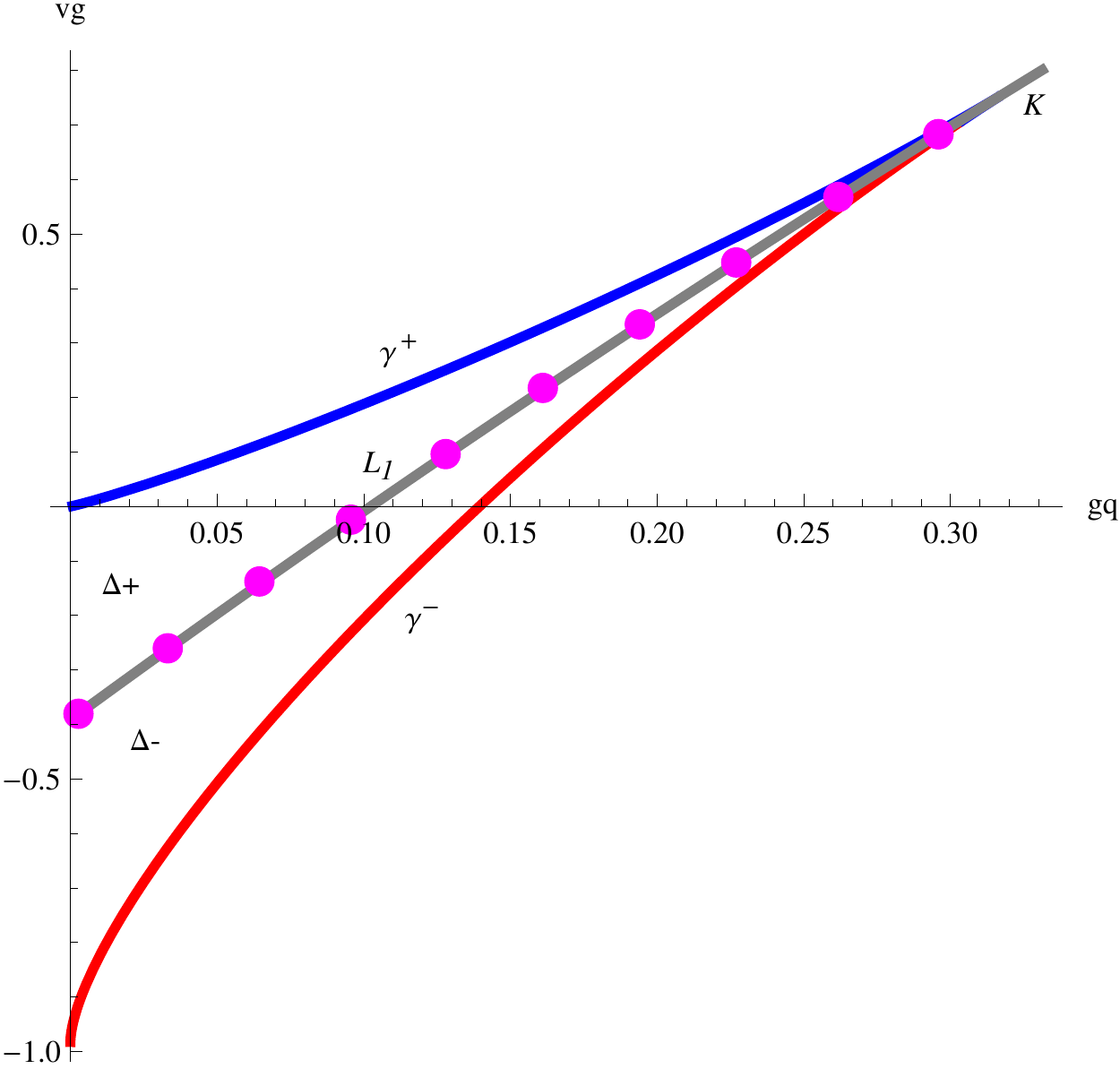}}
  \caption{Left: numerical continuation of Hopf curves from BT points. Right: Bautin points and the curve $\ell_1(q_g,v_g)=0.$ }\label{diagram1}
\end{figure}

 \subsection{Limit cycles}\label{homoclinics}
 Recall that the cuspidal region is partitioned in two components $\Delta^{\pm}$, the upper component $\Delta^{+}$ is defined by the boundaries $\gamma^{+}$  of  BT points and the curve $L_1$ of GH points where the first Lyapunov coefficient vanishes leading to Bautin bifurcations

The following analysis is performed on a particular  BT point in the lower part of the cuspidal curve. A similar analysis can be done with the other BT points.
Starting with this particular BT point,  we get a curve of Hopf points passing through a GH point. By further continuation,  we end up  with a BT  on the upper part of the cuspidal curve as it is shown in left graph of Figure~\ref{diagram1}. Next we take a Hopf point on one side of the GH point and perform the continuation of limit cycles holding the period fixed.

Examples of families of cycles of fixed period  in  parameter space $q_g$--$v_g$, for a fixed value of $\theta_0$, are shown in Sections~\ref{section-familyA} and~\ref{section-familyB}

 As the initial Hopf point is taken closer to the initial BT point, the period increases, in this way we obtain  a nested family  of curves of cycles of increasing period. These families tend  towards a limiting curve which is precisely the homoclinic curve of bifurcations emerging from the initial BT point.

\begin{figure}
\centering
\subfloat[]{\label{(a)} \includegraphics[width=2.5in]{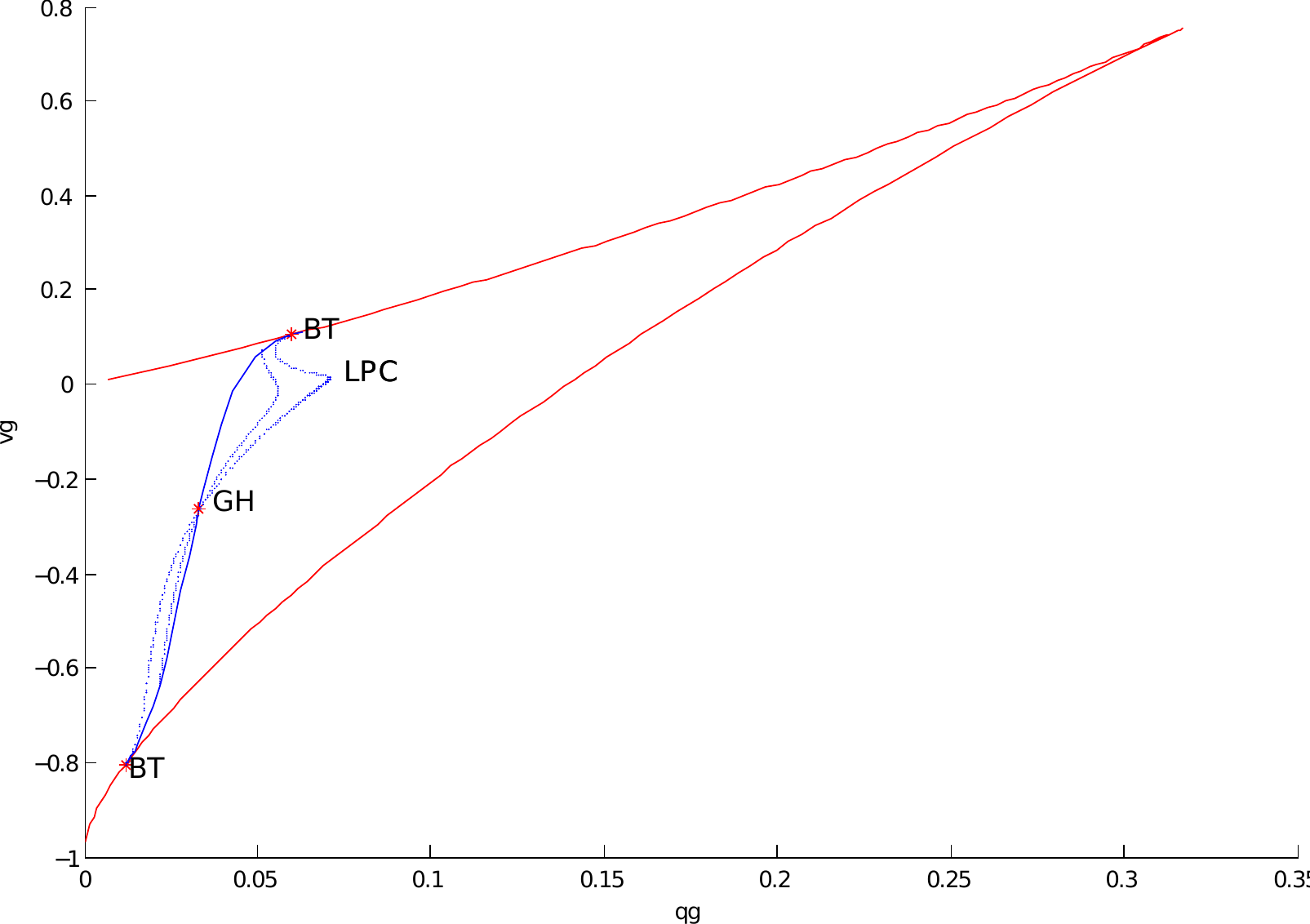}}
  \subfloat[]{\label{(a)}  \includegraphics[width=2.4in]{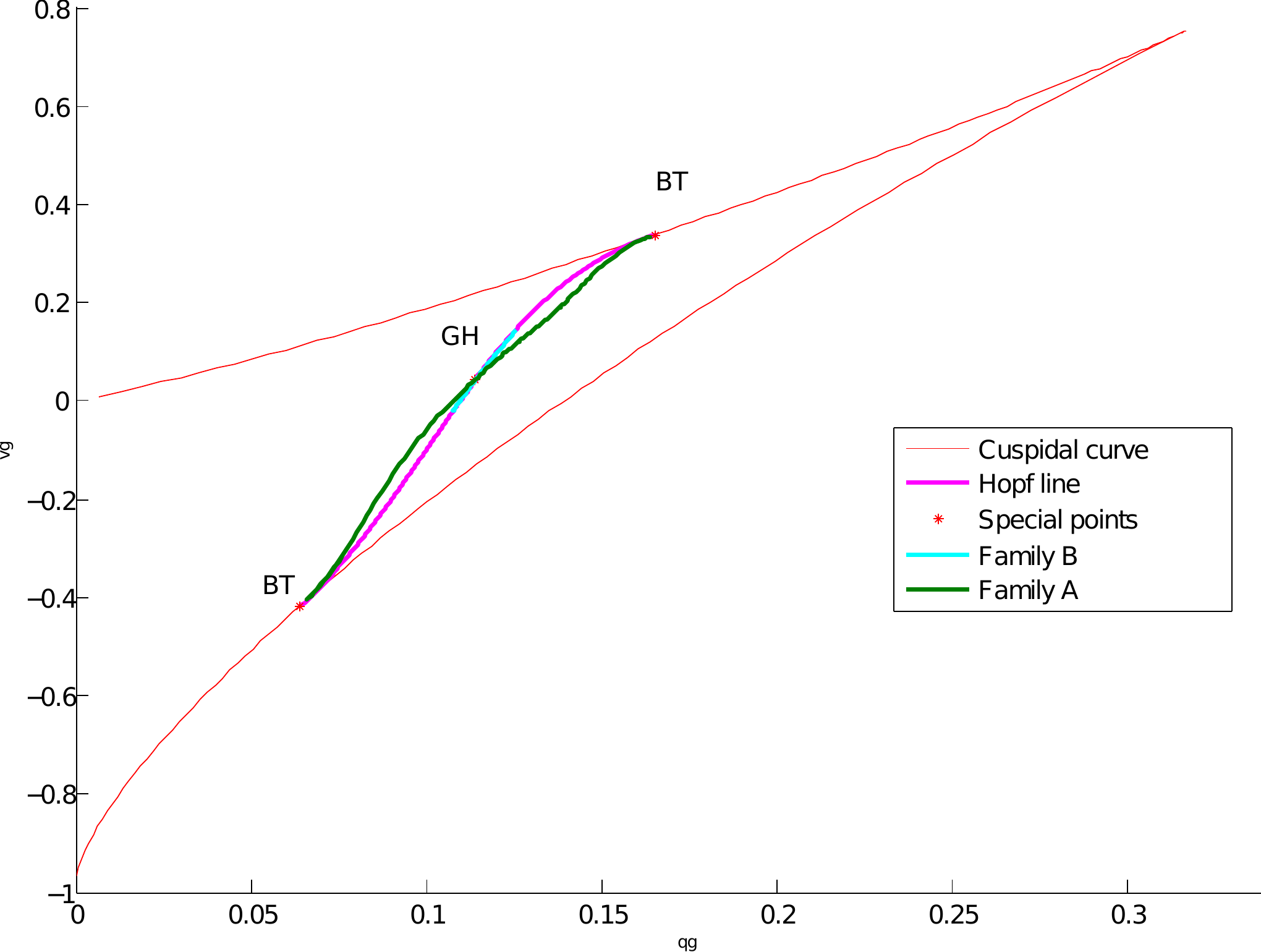}}
  \caption{(a) Two families of limit cycles of increasing period emerging from the line of Hopf points. These families accumulate towards the line of homoclinics.  LPC  is a turning point with respect to the parameter $q_g$.  (b) The two particular families: Families A (in green) of long period and Family  B (light blue) of short period. These families are presented in Sections~\ref{section-familyA} and  \ref{section-familyB}.
  }
\end{figure}

According to Corollary~1,  limit cycles located in the upper part of the cuspidal region are stable, while those in
the lower part are unstable.
 A natural question is if stable limit cycles correspond to stable traveling wave solutions of the PDE.

In order to explore this issue,  we take two limit cycles generated as explained  above, one in the stable region, the other  in the unstable region, as initial conditions for the PDE problem with periodic boundary conditions satisfying the condition (\ref{resonance}) with $m=1$, namely one--bump traveling waves.
We first illustrate the case of an unstable limit cycle  which gives place to an unstable traveling wave in  Figure  \ref{stable_unstable_solitions}. Here, the solution evolves towards a traveling wave, after a transient period.

\begin{figure}
\centering
 \includegraphics[width=3in]{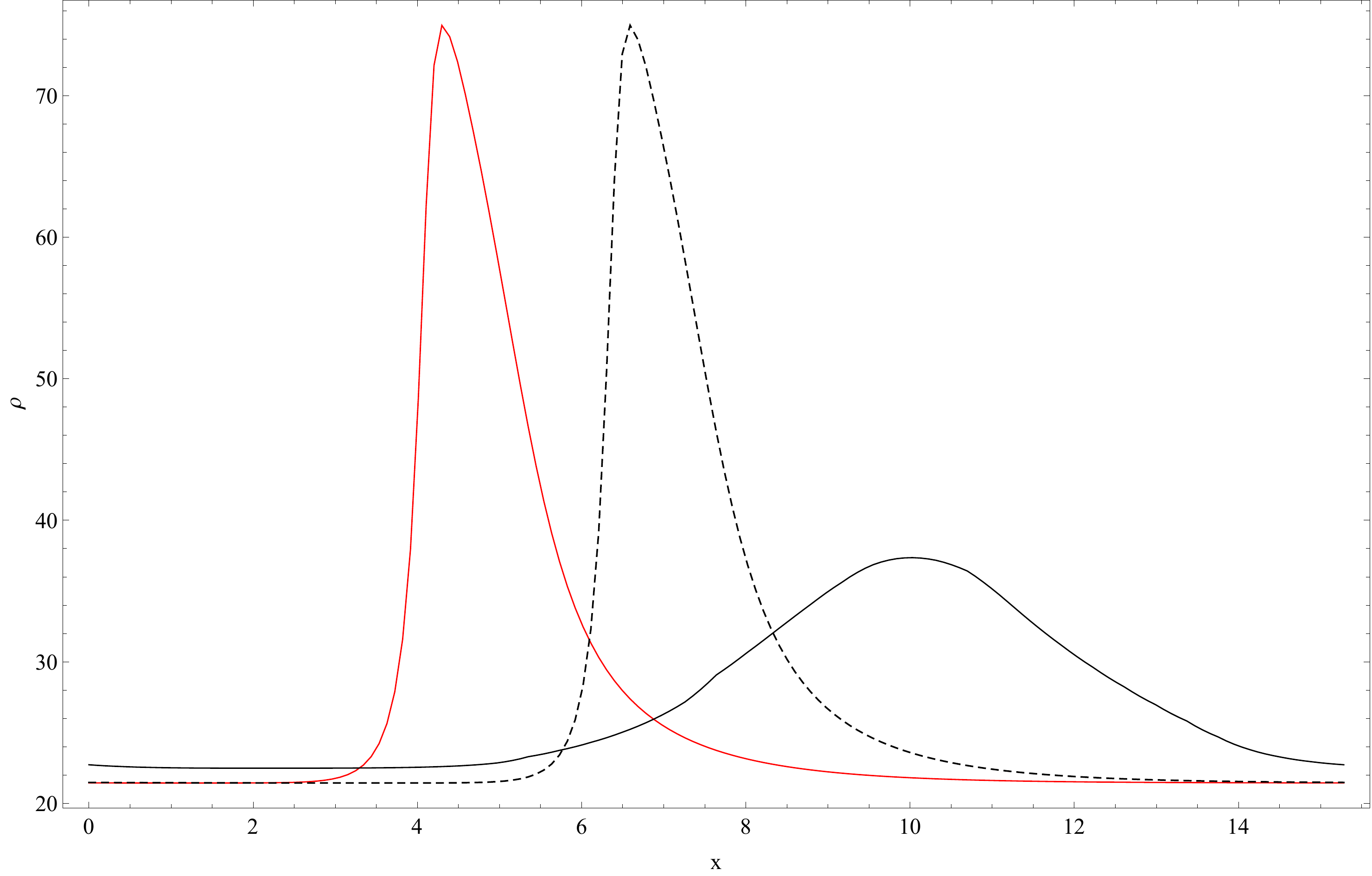}
  \caption{Unstable  traveling wave solution from an unstable limit cycle at $t=0$ min. (black continuous graph) and at $t=50$ (dashed graph), $80$ (red continuous graph) min., when the final shape is fully developed (right).}\label{stable_unstable_solitions}
\end{figure}

Further examples of stable limit cycles are presented in the form of families in the following sections.

\subsection{Family A of long period orbits}\label{section-familyA}
For this family we take initially the Hopf point
$$
q_g=0.164212226,\quad	v_g=0.335569670,\quad	v_c=0.064430330\quad	\theta_0=	0.16
$$
and continue into a family of stable limit cycles with period $T=1469.90$.
The value of the period correspond to a circuit of  length $L=10.49928571$~ km. The shape of some typical members of this family are shown in  Figure \ref{evolve}  left column.
We took the velocity and density profiles as initial conditions for the system of PDEs
(\ref{continuity}--\ref{balance}) and solved it numerically. In Figures \ref{(ev-fam3a)}, \ref{(ev-fam3b)},
\ref{(ev-fam3c)} we show the temporal evolution for the first 50 minutes, of some member of the family,  when a steady state solution of the PDE has fully developed.

\begin{figure}
\centering
 \subfloat[]{\label{(fam3)}\includegraphics[width=2in]{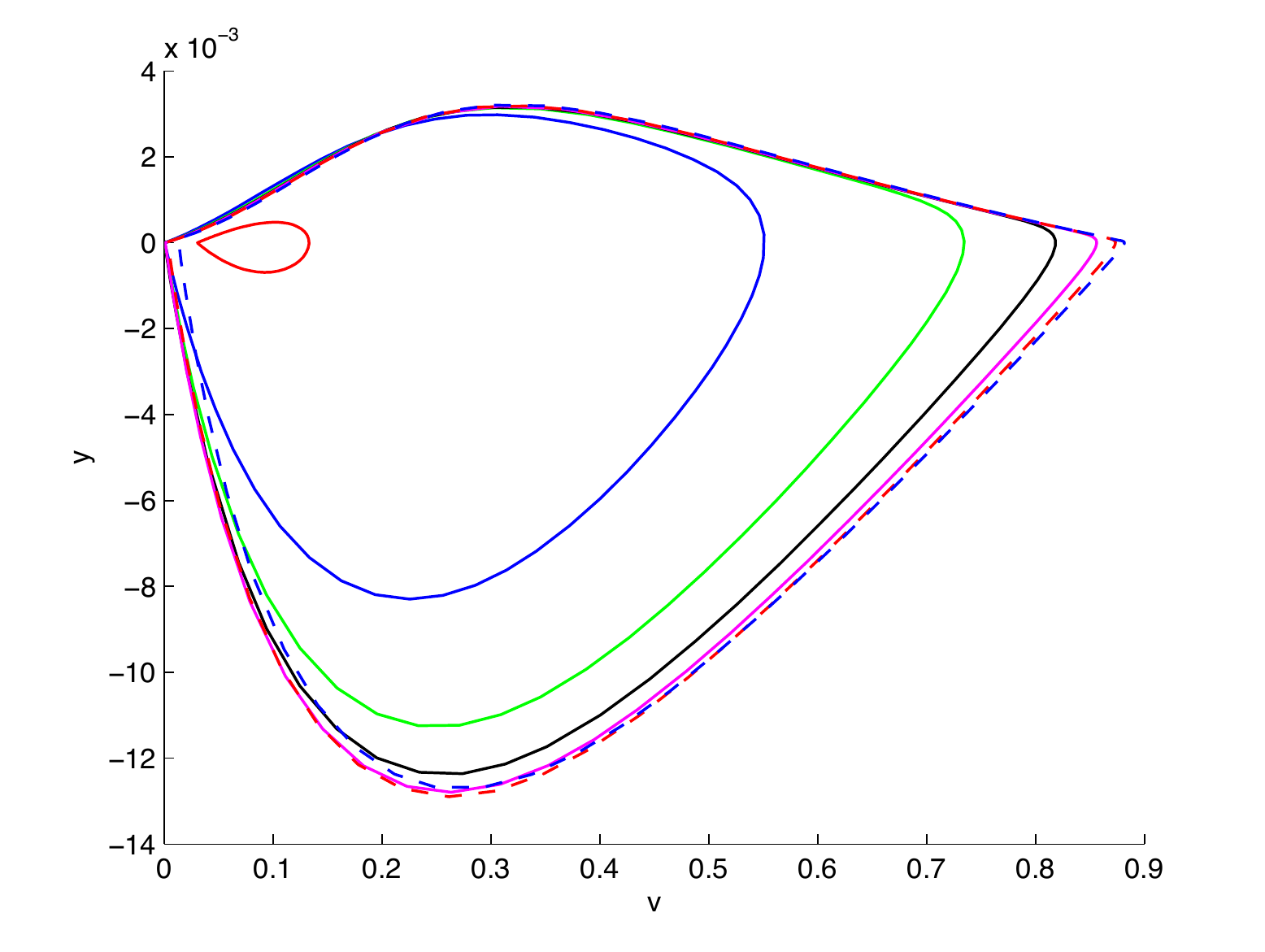}}\quad
   \subfloat[]{\label{(fam5)}\includegraphics[width=2in]{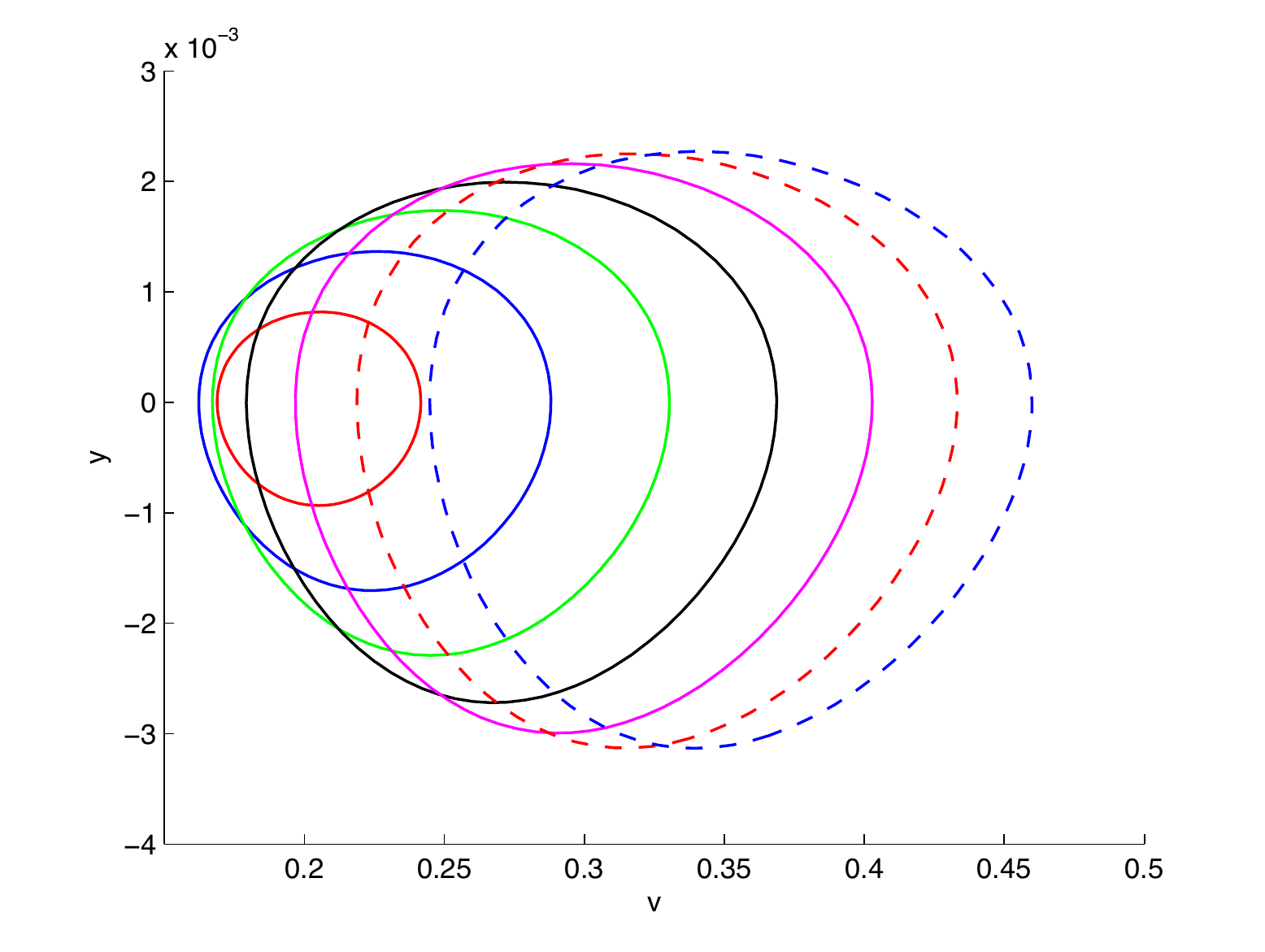}}\\

 \subfloat[]{\label{(ev-fam3a)}  \includegraphics[width=2in]{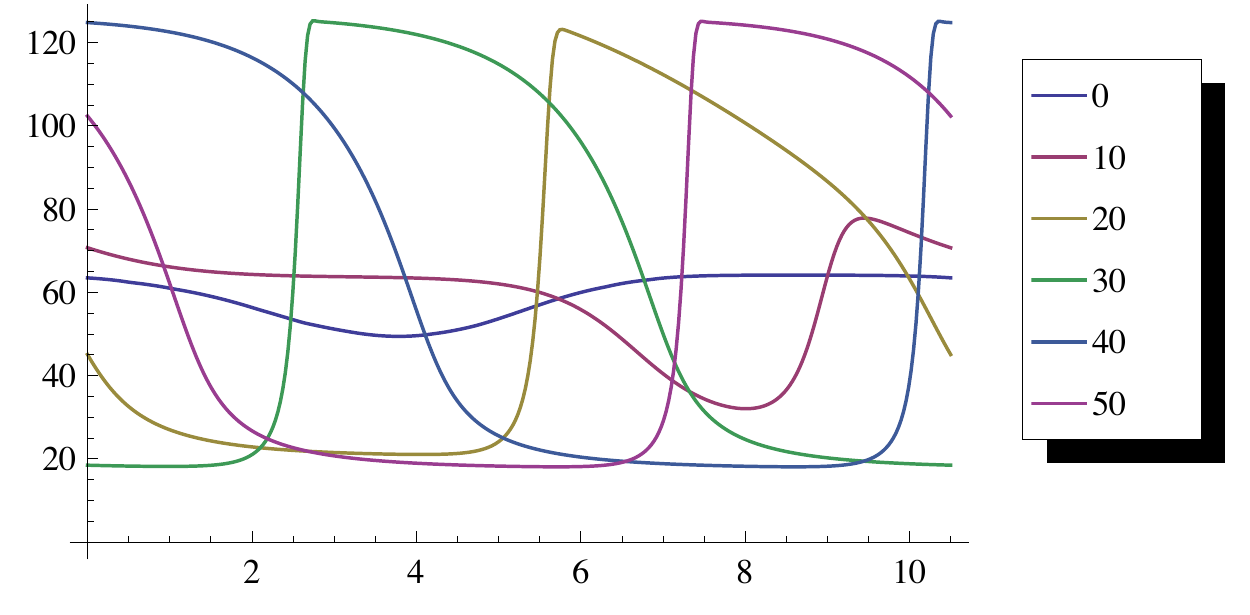}} \quad
         \subfloat[]{\label{(ev-fam5a)}  \includegraphics[width=2in]{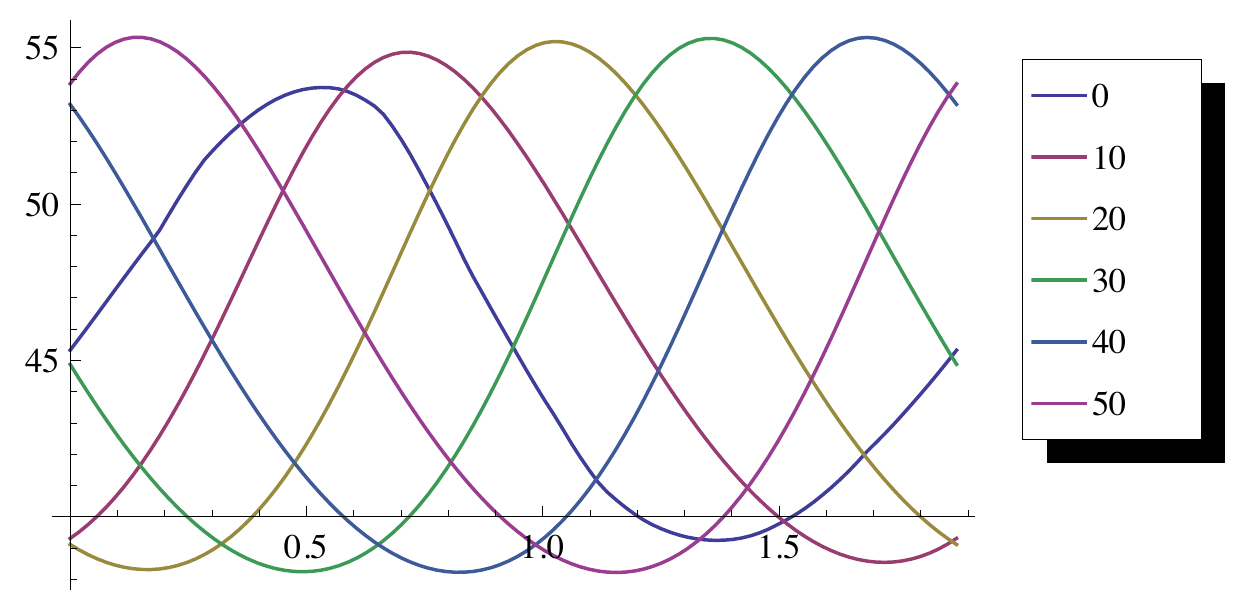}} \\
 \subfloat[]{\label{(ev-fam3b)}  \includegraphics[width=2in]{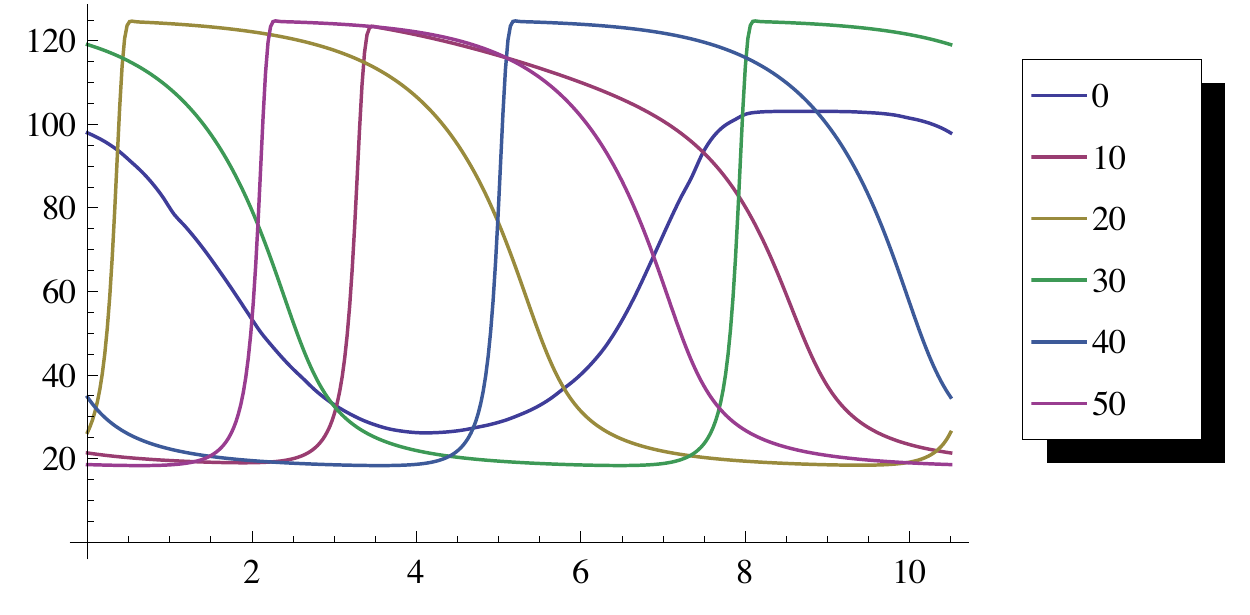}}\quad
            \subfloat[]{\label{(ev-fam5b)}\includegraphics[width=2in]{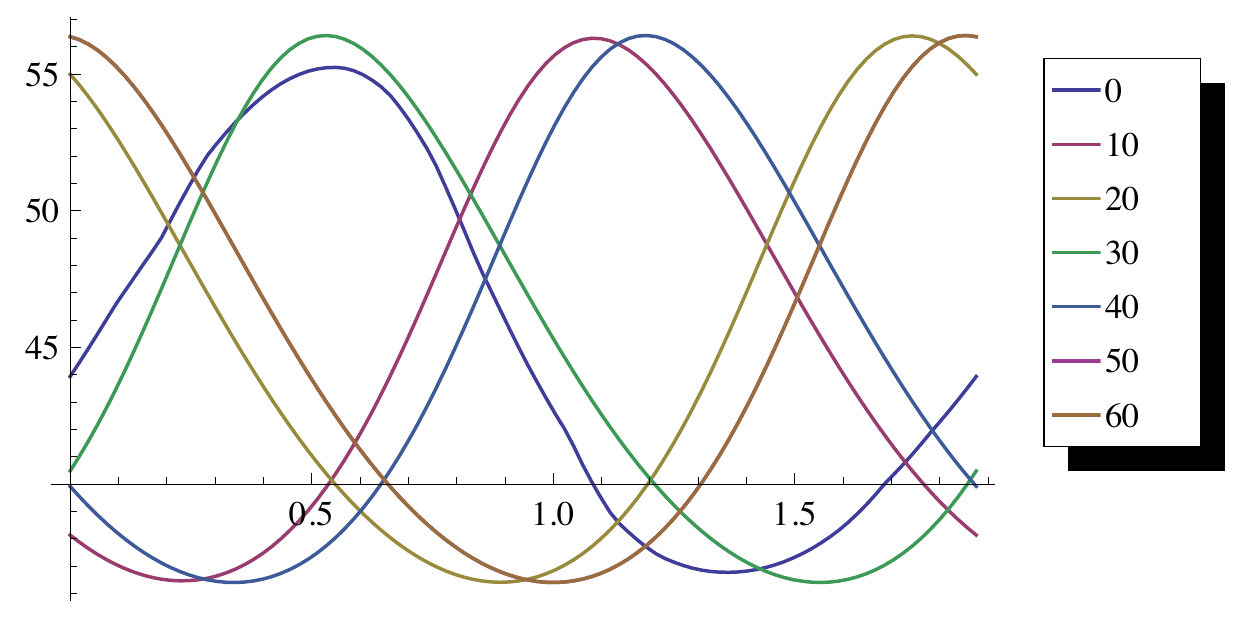}}\\
 \subfloat[]{\label{(ev-fam3c)}\includegraphics[width=2in]{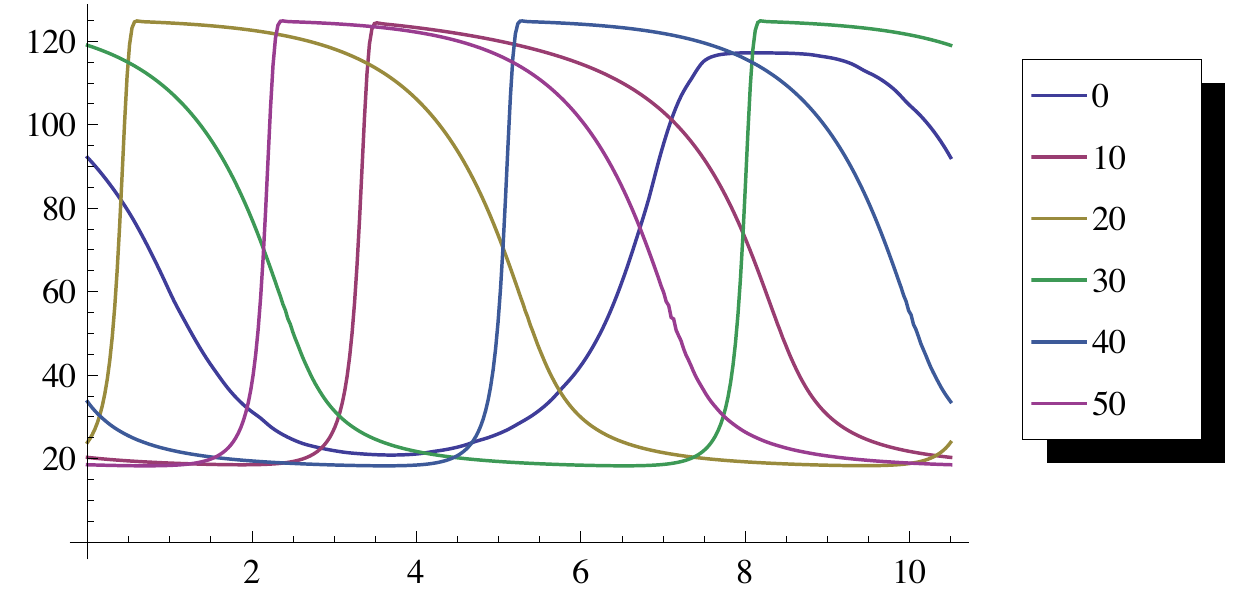}}\quad
         \subfloat[]{\label{(ev-fam5c)}\includegraphics[width=2in]{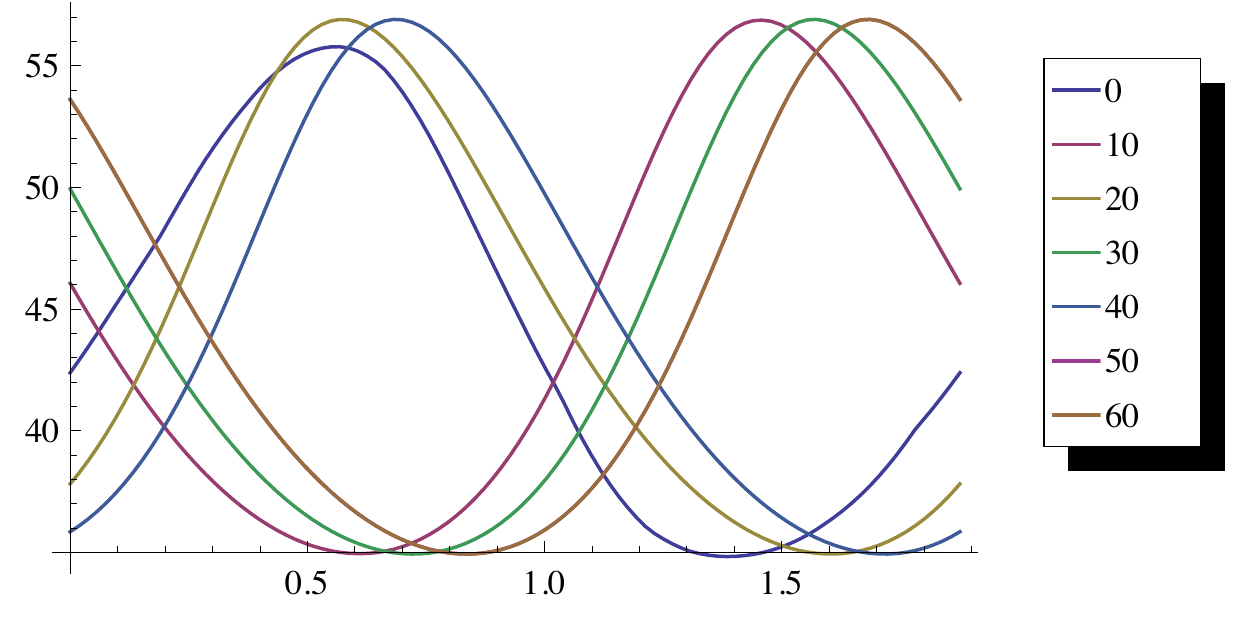}}
 \caption{
 Family A of long periodic cycles (left column) and family B of short periodic cycles  (right column)
 in phase space (first row). Temporal evolution in the PDE for some members of the family~A, for 10, 20, 30, 40, 50 min, and for some members of the family~B, for 10, 20, 30, 40, 50, 60 min. are shown in the following rows in the Figure.\label{evolve}}
\end{figure}

\subsection{Family B of short period orbits}\label{section-familyB}
Family B of short period orbits are shown in Figure~\ref{evolve}. It was computed by continuing to a stable limit cycle a Hopf point near the GH point in the stable part of the diagram. The values of the parameters of the
generating  Hopf point  of the family
are:
$$
q_g=0.133886021,\quad	v_g=0.204071932,\quad	v_c=0.195928068,\quad 	\theta_0=0.16.
$$
The period corresponds to a length of $2L=1.875802158$~km;
its characteristics are shown in Figure~\ref{evolve}.

\subsection{Stability of traveling waves}
The first Lyapunov coefficient determines the stability of limit cycles emerging from a Hopf bifurcation.  According to Proposition~\ref{ell1_positive},  the stability region of limit cycles in parameter space $q_g$--$v_g$ is the upper part $\Delta^+$ in Figure~\ref{diagram1}.

The relationship between Lyapunov stability of a limit cycle and the corresponding traveling wave solution is a delicate issue. Since the dynamical system (\ref{KKode}) is planar, from the Jordan closed curve theorem, a limit cycle defines a bounded region and an unbounded region in phase space. So
for example,  in the case of an unbounded road with bounded boundary conditions, a limit cycle may  be stable from the bounded region  and unstable from the outside part (as is the case of a saddle--node limit cycle), so one cannot assure the existence of a \emph{bounded solution} that is not completely contained in a neighborhood of the limit cycle. As another example, with the same kind of boundary conditions,  if a limit cycle is  unstable (from the bounded and unbounded regions), the corresponding traveling wave is unstable: this follows from the Poincar\'e-Bendixon theorem that guarantees the existence of a bounded solution inside the limit cycle, and from the very definition of Lyapunov instability of the limit cycle.

For the case of periodic boundary conditions. Neither instability of a limit cycle implies instability of the traveling wave, since for example an unstable limit cycle may contain in the bounded region an heteroclinic orbit connecting to a critical point, and by definition this heteroclinic does not satisfy periodic boundary conditions.

The two examples of families presented in the previous sections point out to the conjecture that stable limit cycles correspond to stable traveling waves. Limit cycles of  family A have a long period, and so are close to a homoclinic orbit, therefore they  spend a long time close to a critical point. This gives the family its sharp characteristic shown in Figure~\ref{(fam3)}. In particular our approximation of the limit cycle in 
 MatCont reveals not to be  precise enough to simulate the exact shape of the traveling wave, and therefore a short transient occurs  before the complete profile develops. This is becomes evident  for several member of the family  in Figures \ref{(ev-fam3a)}, \ref{(ev-fam3b)} and \ref{(ev-fam3c)} .  For family B, having short period, the numerical  approximation to the limit cycle with MatCont is good enough, as the initial profile at time $t=0$ is very similar to the fully developed profile. This behavior is shown in Figures~\ref{(ev-fam5a)}, \ref{(ev-fam5b)} and \ref{(ev-fam5c)}.

\subsection{Multiple bump traveling waves}
Multiple bump traveling waves are obtained by considering values of $m>1$. Thus for a limit cycle of minimal period $T$, there is one bump traveling wave in a road of length $L=T/\rho_{max}$ and a two
 bump traveling wave in a road of length $2L.$
In Figure~\ref{2bump}  we show a two--bump traveling wave obtained by the condition (\ref{resonance}) with $m=2$.
\begin{figure}
 \centering
\includegraphics[width=3in]{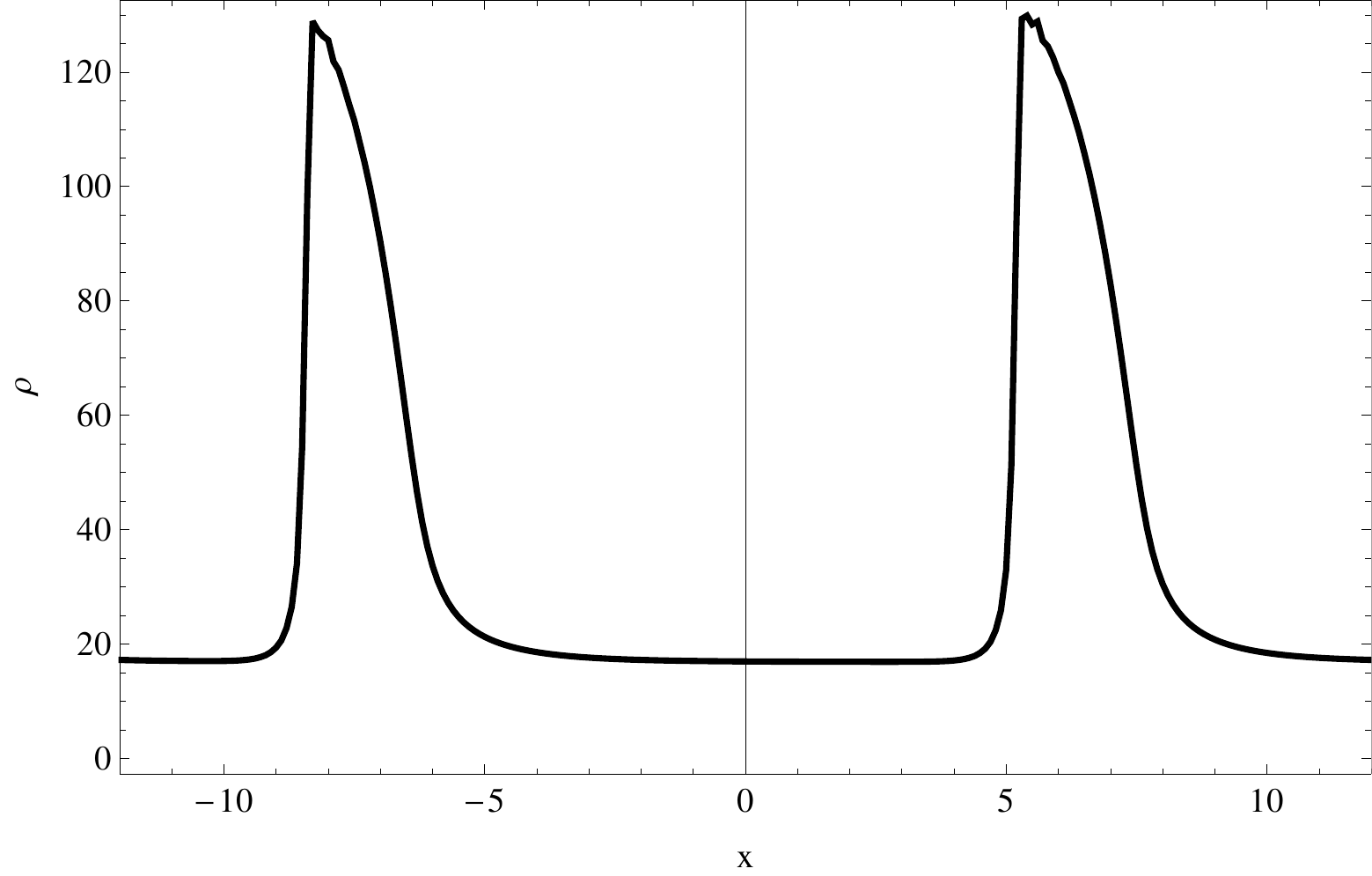}
\caption{A two-bump traveling wave. }
 \label{2bump}
\end{figure}

\section{Conclusions}\label{REMsection}
In this paper, we study traveling waves for the system of PDE (\ref{continuity}, \ref{balance})  for the Kerner--Konh\"auser fundamental diagram by the usual reduction to a system of ODE. We study the surface of critical points, and we analyzed thoroughly the cuspidal region in the parameter space $q_g$--$v_g$. We find, analytically and numerically, a complex map of  Hopf, Takens--Bogdanov, Bautin, homoclinics and heteroclinic bifurcations curves. This scenario is organized around a degenerate Takens Bogdanov point of bifurcation, according to the bifurcation diagram (\ref{lips})  due to Dumortier et al \cite{Du}.

Even though, there is a considerable simplification in the solution space, the dynamical system reveals the complexity of the space of solutions, which make us expect more complexity in the case of the PDE. Dynamical structures of the EDO system can tell us relevant things about the existence of periodic solutions in  bounded domains or bounded solutions in unbounded domains. In particular, limit cycles can be related to periodic solutions.  Homoclinic and heteroclinic trajectories describe traveling waves that tend to an homogenous solution when $\xi\to\pm\infty.$ Our numerical  results obtained in this work show that stable limit cycles yield stable traveling waves and viceversa. The non--linear stability is a more complicated issue which needs further study.

\appendix

\section{Proof of Proposition  \ref{DTB} }
We will write the dynamical system (\ref{KKode}) in normal form in order to analyze its coefficients and prove that it is a degenerate Takens Bogdanov point. Let $w_1=v-v_c$ and $w_2=y,$ then system (\ref{KKode}) is written as
\begin{eqnarray}\label{w12}
w_1'&=&f_1(w_1,w_2)=w_2, \\
w_2'&=& f_2(w_1,w_2) = \lambda q_g (1-\frac{\theta h^2}{(1+hw_1)^2}) w_2 -\mu q_g \frac{(v_e-w_1-v_c)}{(w_1+v_c +v_g)},
\end{eqnarray}
with  $h=\frac{1}{v_c+v_g}$.

By Hopf theorem, choosing $\theta$ as the reference parameter, if $\theta=\theta_0=(v_c+v_g)^2$ then $b(\theta_0)=0$ and the critical point $(v_c,0)$ has  imaginary eigenvalues $l_{1,2}=\pm i\omega_0$. Moreover, $$b'(\theta_0)=-\frac{\lambda q_g}{(v_c+v_g)^2}< 0$$
thus a limit cycle bifurcates from the critical point. Its stability relies on the sign of the first Lyapunov coefficient which we will explicitly calculate.
\bigskip

 Expanding in Taylor Series around $(0,0),$ we will write  system (\ref{KKode}) in the form
 $$
 \vec w'=A \vec w + \frac{1}{2}B(\vec w,\vec w) + \frac{1}{6}C(\vec w,\vec w,\vec w)+ \dots.
 $$
where the bilinear and trilinear forms are defined with $\xi=(\xi_1,\xi_2)$, $\eta=(\eta_1,\eta_2)$,
$\zeta=(\zeta_1,\zeta_2)$ as
\begin{equation}\label{B}
B(\xi,\eta)=\left(
\begin{array}{c}
\frac{\partial^2 f_1}{\partial w_1^2}\xi_1\eta_1 +\frac{\partial^2 f_1}{\partial w_1\partial w_2} (\xi_1\eta_2+\eta_1\xi_2)+ \frac{\partial^2 f_1}{\partial w_2^2}\xi_2\eta_2 \\[5pt]
\frac{\partial^2 f_2}{\partial w_1^2}\xi_1\eta_1 +\frac{\partial^2 f_2}{\partial w_1\partial w_2} (\xi_1\eta_2+\eta_1\xi_2) + \frac{\partial^2 f_2}{\partial w_2^2}\xi_2\eta_2
\end{array}
\right)
\end{equation}
and
$$
C(\xi,\eta,\zeta) =
$$
\begin{equation}\scriptsize
\left(
\begin{array}{c}
\frac{\partial^3 f_1}{\partial w_1^2}\xi_1\eta_1\zeta_1+
\frac{\partial^2 f_1}{\partial w_1^2 \partial w_2}( \xi_1\eta_1\zeta_2+\xi_2\eta_1\zeta_1+\xi_1\eta_2\zeta_1)+
\frac{\partial^2 f_1}{\partial w_1 \partial w_2^2} (\xi_1\eta_2\zeta_2+\xi_2\eta_2\zeta_1+\xi_2\eta_1\zeta_2)+
\frac{\partial^2 f_1}{\partial w_2^3} \xi_2\eta_2\zeta_2\\[5pt]
\frac{\partial^3 f_2}{\partial w_1^2}\xi_1\eta_1\zeta_1+
\frac{\partial^2 f_2}{\partial w_1^2 \partial w_2}( \xi_1\eta_1\zeta_2+\xi_2\eta_1\zeta_1+\xi_1\eta_2\zeta_1)+
\frac{\partial^2 f_2}{\partial w_1 \partial w_2^2} (\xi_1\eta_2\zeta_2+\xi_2\eta_2\zeta_1+\xi_2\eta_1\zeta_2)+
\frac{\partial^2 f_2}{\partial w_2^3} \xi_2\eta_2\zeta_2
\end{array}
\right)
\end{equation}

According to (\ref{w12}),
 $$
 \frac{\partial f_1}{\partial w_1}=0\quad\mbox{and}\quad
 \frac{\partial f_1}{\partial w_2}=1,
 $$
all the other higher order derivatives of $f_1$ are zero, thus  the first components of $B$ and $C$ are zero.
For the second components, we calculate the following partial derivatives
\begin{eqnarray*}
\frac{\partial f_2}{\partial w_1}&=&  \frac{2
\lambda q_g h^3 \theta}{(1+h w_1)^3}w_2-\frac{\mu q_g(v_e'-1)}{(w_1+v_c +v_g)}+ \frac{\mu q_g(v_e-w_1-v_c)}{(w_1+v_c+v_g)^2},\\
\frac{\partial f_2}{\partial w_2}&=& \lambda q_g \left(1-\frac{\theta h^2 }{(1+hw_1)^2}\right). \\
\end{eqnarray*}
The derivatives of second order are:
\begin{eqnarray*}
\frac{\partial^2 f_2}{\partial w_1^2}&=&
\frac{-6\lambda q_g h^4 \theta}{(1+h w_1)^4}w_2+\\
 && \frac{\mu q_g}{(w_1+v_c+v_g)}\left[\frac{ 2(v_e'-1)}{(w_1+v_c +v_g)}-v_e''- \frac{2(v_e-w_1-v_c)}{(w_1+v_c +v_g)^2}\right], \\
\frac{\partial^2 f_2}{\partial w_2 \partial w_1}&=&\frac{2 \lambda q_g h^3 \theta}{(1+h w_1)^3},\\
\frac{\partial^2 f_2}{\partial w_2^2}&=&0,
\end{eqnarray*}
when these derivatives are evaluated at  $w_1=w_2=0$  yields
\begin{eqnarray*}
\frac{\partial^2 f_2}{\partial w_1^2}
&=&
\frac{(2\omega_0^2-\mu q_g v_e'')}{(v_c +v_g)}.\\
\frac{\partial^2 f_2}{\partial w_2 \partial w_1}&=& 2 \lambda q_g h ,\\
\frac{\partial^2 f_2}{\partial w_2^2}&=&0.
\end{eqnarray*}

For the third order partial derivatives we get
\begin{eqnarray*}
\frac{\partial^3 f_2}{\partial w_1^3}&=&
\frac{24 \lambda q_g h^5 \theta}{(1+h w_1)^5}w_2+\frac{\mu q_g}{(w_1+v_c+v_g)}\cdot\\
&& \left[\frac{ 3v_e''}{(w_1+v_c +v_g)}-v_e'''-\frac{ 6(v_e'-1)}{(w_1+v_c +v_g)^2}
+\frac{6(v_e-w_1-v_c)}{(w_1+v_c+v_g)^3}\right],
\end{eqnarray*}
and
\begin{eqnarray*}
\frac{\partial^3 f_2}{\partial w_2 \partial w_1^2}&=&-\frac{6\lambda q_g h^4\theta}{(1+hw_1)^4},\\
\frac{\partial^3 f_2}{\partial w_2^2 \partial w_1}&=&\frac{\partial^3 f_2}{\partial w_2^3}=0.
\end{eqnarray*}
when they are evaluated in $w_1=w_2=0,$ we obtain
\begin{eqnarray*}
\frac{\partial^3 f_2}{\partial w_1^3}&=&
\frac{1}{(v_c+v_g)}\left[\frac{ 3\mu q_g v_e''}{(v_c +v_g)}-\mu q_gv_e'''-\frac{ 6\omega_0^2}{(v_c +v_g)}
\right]\\
\frac{\partial^3 f_2}{\partial w_2 \partial w_1^2} &=& -6\lambda q_g h^4\theta.
\end{eqnarray*}

Then $B$ and $C$ are equal to
$$B(\xi,\eta)=
\left(
\begin{array}{l}
 0  \\
\frac{(2 \omega_0^2-\mu q_g v_e''(v_c))}{(v_c+v_g)}\xi_1 \eta_1+2\lambda q_g h (\xi_1 \eta_2+\eta_1\xi_2 )
\end{array}
\right)
$$
and\goodbreak
$$C(\vec \xi, \vec \eta,\vec \zeta)=$$
$$\scriptsize
\left(
\begin{array}{l}
 0  \\
\frac{1}{(v_c+v_g)}\left[\frac{3 \mu q_gv_e''(v_c)}{(v_c+v_g)}-\mu q_g v_e'''(v_c)-\frac{6\omega_0^2}{v_c+v_g}\right]\xi_1 \eta_1 \zeta_1 -6\lambda q_g h^4\theta( \xi_1\eta_1\zeta_2+\xi_2\eta_1\zeta_1+\xi_1\eta_2\zeta_1)
\end{array}
\right).
$$

To calculate the first Lyapunov coefficient we have first to calculate  vectors $\vec q$ and $\vec p$ such that
$A \vec q=\omega_0 i \vec q$ and $A^T \vec p=-\omega_0 i \vec p,$ respectively, and they satisfy $\langle \vec p, \vec q \rangle=1$. We take
 $\vec q^T=(1, \omega_0 i)$ and $ \vec p^T=\frac{1}{2}(1,\frac{i}{\omega_0})$.
Now we have to calculate
$g_{20}=\langle \vec p, B(\vec q, \vec q) \rangle,$ $g_{11}=\langle \vec p, B(\vec q, \vec{\overline{q}} ) \rangle$ and $g_{21}=\langle \vec p, C(\vec q, \vec q, \vec{\overline{q}} )\rangle$ in order to evaluate
\begin{equation}\label{l1}
\ell_1=\frac{1}{2 \omega_0^2} Re(i g_{20} g_{11}+\omega_0 g_{21}),
\end{equation}
which is the first Lyapunov coefficient. Now,
\begin{eqnarray*}
g_{20}&=&
2 \lambda q_g h-\frac{(2\omega_0^2-\mu q_g v_e''(v_c)) i}{2\omega_0(v_c+v_g)},
\end{eqnarray*}

\begin{eqnarray*}
g_{11}&=&
-\frac{(2\omega_0^2-\mu q_g v_e''(v_c)) i}{2\omega_0(v_c+v_g)},
\end{eqnarray*}

\begin{eqnarray*}
g_{21}&=&
\frac{i}{2\omega_0(v_c+v_g)}\left[\frac{3\mu q_g v_e''(v_c)}{(v_c+v_g)}-\mu q_g v_e'''(v_c)
-\frac{6\omega_0^2}{(v_c+v_g)}\right]-3\lambda q_g h^4\theta
\end{eqnarray*}
Thus
\begin{eqnarray*}
ig_{20}g_{11}&=&
\frac{ \lambda q_g h(2\omega_0^2-\mu q_g v_e''(v_c))}{\omega_0(v_c+v_g)} -\frac{(2\omega_0^2-\mu q_g v_e''(v_c)) i}{2\omega_0(v_c+v_g)}.
\end{eqnarray*}
Substituting these values in (\ref{l1}) we obtain
\begin{eqnarray*}
l_1(\theta_0)
&=&-\frac{\lambda\mu q_g^2 h}{2\omega_0^3(v_c+v_g)}\left( \frac{v_e'(v_c)-1}{v_c+v_g}+v_e''(v_c)\right).
\end{eqnarray*}

\section{Proof of theorem \ref{DTB}}

Expanding in Taylor Series $c(w_1)=\lambda q_g (1-\frac{\theta h^2}{(1+hw_1)^2})$ and $f(w_1)=\mu q_g \frac{(v_e(v)-w_1-v_c)}{(w_1+v_c +v_g)}=L(w_1)(v_e-w_1-v_c)$ around $(0,0)$  we obtain:
$$(1+hw_1)^2 =(1-2h w_1+3 h^2 w_1^2-4 h^3 w_1^3+5h^4 w_1^4+\dots  ) $$
and
$$
c(w_1)=\lambda q_g (1-\theta h^2(1-2h w_1+3 h^2 w_1^2-4 h^3 w_1^3+5h^4 w_1^4+\dots  )).
$$
If we chose $\theta_0=(v_c+v_g)^2$ then $\theta_0 h^2=1$ and
$$
c(w_1)=\lambda q_g \theta_0 h^3( 2 w_1-3 h^2 w_1^2+4 h^3 w_1^3-5h^4 w_1^4+\dots  ).
$$
Then $$c(w_1)w_2= \lambda q_g \theta_0 h^3 ( 2 w_1 w_2-3 h^2 w_1^2 w_2+....)=b_2 w_1 w_2+ b_3w_1^2 w_2+\dots.$$
where
$$
b_2=2\lambda q_g h,\quad b_3= -3\lambda q_g h^3.
$$

On the other hand,
\begin{equation}
f(w_1)=f(0)+f'(0) w_1+ \frac{1}{2} f''(0) w_1^2 + \frac{1}{6}f'''(0) w_1^3+\dots
\end{equation}
with
\begin{eqnarray*}
f'(w_1)&=&L'(w_1) (v_e-w_1-v_c)+L(w_1)(v_e'-1), \\
f''(w_1)&=&L''(w_1) (v_e-w_1-v_c)+2L'(w_1)(v_e'-1)+L(w_1) v_e'', \\
f'''(w_1)&=&L'''(w_1)(v_e-w_1-v_c)+3L''(w_1)(v_e'-1)+3L'(w_1)v_e''+L(w_1) v_e'''.  \\
\end{eqnarray*}
Evaluating these derivatives in $w_1=0$ and using the hypothesis we obtain
$$
f(w_1)= \frac{1}{6}f'''(0) w_1^3 +\dots=a_3 w_1^3+a_4 w_1^4+\dots
$$
Given that $a_2=0$ and $a_3 b_2 \neq 0$ we can write system (\ref{w12})
in the normal form (\ref{orbitally-equivalent}) as
\begin{eqnarray*}
\dot{w}_0&=&w_1, \\
\dot{w}_1&=& a_3  w_0^3 + b_2 w_0 w_1 + b'_3 w_0^2 w_1+O(\|(w_0,w_1)\|)^5 . \\
\end{eqnarray*}
where $a_3=\frac{-\mu q_g v_e'''(v_c)}{6(v_c+v_g)},$ and $b'_3=b_3-\frac{3b_2 a_4}{5a_3}.$
By hypothesis  $v_e'''(v_c)<0,$ therefore  $a_3>0,$ and  we are in the saddle case.

\end{document}